\edef\savecatcodeat{\the\catcode`@} \catcode`\@=11
\def\tb@ifSpecChars#1#2{#1}
\def\tb@ifNoSpecChars#1#2{#2}
\def\tableau{%
  \bgroup
  \@ifstar{\let\Tif\tb@ifNoSpecChars\tb@tableauB}
          {\let\Tif\tb@ifSpecChars\tb@tableauB}}
\def\tb@tableauB{
  \@ifnextchar[{\tb@tableauC}{\tb@tableauC[]}}
\def\tb@tableauC[#1]{\hbox\bgroup%
    \let\\=\cr
    \def\bl{\global\let\tbcellF\tb@cellNF}%
    \def\tf{\global\let\tbcellF\tb@cellH}
%
    \dimen2=\ht\strutbox \advance\dimen2 by\dp\strutbox%
    \ifx\baselinestretch\undefined\relax%
    \else%
       \dimen0=100sp \dimen0=\baselinestretch\dimen0%
       \dimen2=100\dimen2 \divide\dimen2 by\dimen0%
    \fi%
    \let\tpos\tb@vcenter
    \tb@initYoung
    \tb@options#1\eoo
    \let\arrow\tb@arrow%
    \dimen0=\Tscale\dimen2%
    \dimen1=\dimen0 \advance\dimen1 by \tb@fframe%
    \lineskip=0pt\baselineskip=0pt
%
    \def\tb@nothing{}%
    \def\endcellno{$\rss\egroup\bss\egroup}
    \def\endcell{\endcellno\kern-\dimen0}
    \def\begincell{\vbox to\dimen0\bgroup\vss\hbox to\dimen0\bgroup\hss$}%
    \let\overlay\tb@overlay%
    \let\fl\tb@fl%
    \let\lss\hss\let\rss\hss\let\tss\vss\let\bss\vss
    \def\mkcell##1{
        \let\tbcellF\tb@cellD
        \def\tb@cellarg{##1}
        \ifx\tb@cellarg\tb@nothing\let\tb@cellarg\tb@cellE\fi%
        \begincell\tb@cellarg\endcellno
        \tbcellF}
    \let\savecellF\tbcellF
     \Tif{\catcode`,=4\catcode`|=\active}{}\tb@tableauD}%
\let\tb@savetableauD\tableauD
\gdef\tableauD#1{%
  \Tif{
    \mathcode`|="8000 \mathcode`*="8000%
    \mathcode`~="8000 \mathcode`@="8000%
    \def@{\bullet}%
    \let|\cr
    \let*\tf
    \let~\sk
  }{}%
  \tpos{\tabskip=0pt\halign{&\mkcell{##}\cr#1\crcr}}%
  \global\let\tbcellF\savecellF
  \egroup
  \egroup}
\let\tb@tableauD\tableauD
\let\tableauD\tb@savetableauD
\let\tb@savetableauD\undefined
\def\tb@options#1{\ifx#1\eoo\relax\else\tb@option#1\expandafter\tb@options\fi}
\def\tb@option#1{%
  \if#1t\let\tpos\tb@vtop\fi
  \if#1c\let\tpos\tb@vcenter\fi
  \if#1b\let\tpos\vbox\fi
  \if#1F\tb@initFerrers\fi
  \if#1Y\tb@initYoung\fi
  \if#1s\tb@initSmall\fi
  \if#1m\tb@initMedium\fi
  \if#1l\tb@initLarge\fi
  \if#1p\tb@initPartition\fi
  \if#1a\tb@initArrow\fi
}
\def\tb@vcenter#1{\ifmmode\vcenter{#1}\else$\vcenter{#1}$\fi}
\def\tb@vtop#1{\hbox{\raise\ht\strutbox\hbox{\lower\dimen0\vtop{#1}}}}
\def\tb@initPartition{\def\Tscale{.3}}
\def\tb@initSmall{\def\Tscale{1}}
\def\tb@initMedium{\def\Tscale{2}}
\def\tb@initLarge{\def\Tscale{3}}
\def\tb@initArrow{\dimen2=1.25em}
\def\tb@initYoung{%
  \def\tb@cellE{}
  \let\tb@cellD\tb@cellN
  \def\sk{\global\let\tbcellF\tb@cellNF}}
\def\tb@initFerrers{%
  \def\tb@cellE{\bullet}
  \let\tb@cellD\tb@cellNF
  \def\sk{\bullet}}
\def\tb@sframe#1{%
  \vbox to0pt{
    \vss
    \hbox to0pt{%
      \hss
      \vbox to\dimen1{
        \hrule depth #1 height0pt
        \vss
        \hbox to\dimen1{
          \vrule width #1 height\dimen1
          \hss
          \vrule width #1
          }%
        \vss
        \hrule height #1 depth 0in
        }%
      \kern-\tb@hframe
      }%
    \kern-\tb@hframe}}
\def\tb@hframe{.2pt}\def\tb@fframe{.4pt}\def\tb@bframe{1.2pt}
\def\tb@cellH{\tb@sframe{\tb@bframe}}       
\def\tb@cellNF{}                            
\def\tb@cellN{\tb@sframe{\tb@fframe}}       
\let\tbcellF\tb@cellN                       
\def\tb@rpad{1pt}
\def\tb@lpad{1pt}
\def\tb@tpad{1.8pt}
\def\tb@bpad{1.8pt}
\def\tb@overlay{\endcell\@ifnextchar[{\tb@overlaya}{\begincell}}
\def\tb@overlaya[#1]{\vbox to\dimen0\bgroup%
  \tb@overlayoptions#1\eoo%
  \tss\hbox to\dimen0\bgroup\lss}
\def\tb@overlayoptions#1{\ifx#1\eoo\relax\else\tb@overlayoption#1\expandafter\tb@overlayoptions\fi}
\def\tb@overlayoption#1{
  \if#1t\def\tss{\vskip\tb@tpad}\let\bss\vss\fi
  \if#1c\let\tss\vss\let\bss\vss\fi
  \if#1b\def\bss{\vskip\tb@bpad}\let\tss\vss\fi
  \if#1l\def\lss{\hskip\tb@lpad}\let\rss\hss\fi
  \if#1m\let\lss\hss\let\rss\hss\fi
  \if#1r\def\rss{\hskip\tb@rpad}\let\lss\hss\fi
}
\def\tb@fl{\endcell\begincell\vrule depth 0pt width \dimen0 height \dimen0 \endcell\begincell}
\def\tb@arrowpad{.5}
\newoptcommand{\tb@arrow}{\@ne}[2]{%
  \endcell
   \begingroup%
   \let\dg@getnodesize\tb@getnodesize
   \dg@USERSIZE=#1\relax%
   \ifnum\dg@USERSIZE<\@ne \dg@USERSIZE=\@ne \fi%
   \dg@parse{#2}%
   \dg@label{\tb@draw{#1}{#2}}}
\def\tb@getnodesize#1#2#3#4#5{\dimen3=\tb@arrowpad\dimen2 #4=\dimen3 #5=\dimen3\relax}
\def\tb@getnodesize#1#2#3#4#5{\ifnum#2=0\ifnum#3=0\tb@getnodesizetail{#4}{#5}\else\tb@getnodesizehead{#4}{#5}\fi\else\tb@getnodesizehead{#4}{#5}\fi}
\def\tb@getnodesizetail#1#2{\dimen3=.5\dimen2 #1=\dimen3 #2=\dimen3}
\def\tb@getnodesizehead#1#2{\dimen3=.5\dimen2 #1=\dimen3 #2=\dimen3}
\def\tb@draw#1#2#3#4{%
        \dg@X=0\dg@Y=0\dg@XGRID=1\dg@YGRID=1\unitlength=.001\dimen0%
        \dg@LBLOFF=\dgLABELOFFSET \divide\dg@LBLOFF\unitlength%
        \dg@drawcalc
        \begincell
        \let\lams@arrow\tb@lams@arrow
        \begin{picture}(0,0)\begingroup\dg@draw{#1}{#2}{#3}{#4}\end{picture}%
        \endcell
        \endgroup
        \begincell}
\def\tb@lams@arrow#1#2{%
 \lams@firstx\z@\lams@firsty\z@
 \lams@lastx#1\relax\lams@lasty#2\relax
 \lams@center\z@
 %
 \N@false\E@false\H@false\V@false
 \ifdim\lams@lastx>\z@\E@true\fi
 \ifdim\lams@lastx=\z@\V@true\fi
 \ifdim\lams@lasty>\z@\N@true\fi
 \ifdim\lams@lasty=\z@\H@true\fi
 \NESW@false
 \ifN@\ifE@\NESW@true\fi\else\ifE@\else\NESW@true\fi\fi
 %
 \ifH@\else\ifV@\else
  \lams@slope
  \ifnum\lams@tani>\lams@tanii
   \lams@ht\ten@\p@\lams@wd\ten@\p@
   \multiply\lams@wd\lams@tanii\divide\lams@wd\lams@tani
  \else
   \lams@wd\ten@\p@\lams@ht\ten@\p@
   \divide\lams@ht\lams@tanii\multiply\lams@ht\lams@tani
  \fi
 \fi\fi
 %
 \ifH@  \lams@harrow
 \else\ifV@ \lams@varrow
 \else \lams@darrow
 \fi\fi
}
\let\savecatcodeat\undefined
\numberwithin{equation}{section}
\renewcommand{\subsubsection}{\@startsection
{subsubsection} {3} {0mm} {\baselineskip} {-0.5\baselineskip} {\normalfont\normalsize\bfseries}} \makeatother
\newtheorem{theorem}{Theorem}
\newtheorem{lemma}[theorem]{Lemma}
\newtheorem{proposition}[theorem]{Proposition}
\newtheorem{example}[theorem]{Example}
\newtheorem{corollary}[theorem]{Corollary}
\newtheorem{definition}[theorem]{Definition}
\newtheorem{property}[theorem]{Property}
\newtheorem{remark}[theorem]{Remark}
\def\la{{\lambda}}
\def\cal L{{\mathcal L}}
\def\aa{\alpha}
\def \core {{\mathfrak c}}
\def \kbnd { \mathfrak p}
\def\gg {\gamma}
\def\aa {\alpha}
\def \part {\vdash}
\begin{document}

\title[A recursion formula for $k$-Schur functions]
{A recursion formula for $k$-Schur functions}

\author{Daniel Bravo}
\address{Department of Mathematics and Computer Science,
Wesleyan University,
Science Tower 655, 265 Church St.,
Middletown, CT 06459-0128 USA}
\email{dbravovivall@wesleyan.edu}

\author{Luc Lapointe}
\thanks{L. L. was partially supported by the Anillo Ecuaciones Asociadas a Reticulados financed by the World
Bank through the Programa Bicentenario de Ciencia y Tecnolog\'{\i}a, NSF grant DMS-0652641,
and by the Programa Reticulados y Ecuaciones
of the Universidad de Talca}
\address{Instituto de Matem\'atica y F\'{\i}sica, Universidad de Talca,
Casilla 747, Talca, Chile} \email{lapointe@inst-mat.utalca.cl}


\begin{abstract}   The Bernstein operators allow to build recursively the
Schur functions.
We present a recursion formula for $k$-Schur functions at $t=1$
based on combinatorial operators that generalize
the Bernstein operators.
The recursion leads immediately to a combinatorial
interpretation for the expansion coefficients of $k$-Schur functions at $t=1$
in terms of homogeneous
symmetric functions.
\end{abstract}

\keywords{Symmetric functions, Schur functions, Bernstein operators, $k$-Schur functions}

\maketitle

\section{Introduction}

The study of Macdonald polynomials led to the discovery of
symmetric functions, $s_\lambda^{(k)}(t)$, indexed by partitions
whose first part is no larger than a fixed integer $k\geq 1$, and depending on
a parameter $t$.
Experimentation suggested that when $t=1$, the functions
$s_{\lambda}^{(k)}:=s_\lambda^{(k)}(1)$ play the
fundamental combinatorial role of the Schur basis in
the symmetric function subspace $\Lambda^{k}=\mathbb Z[h_1,\ldots,h_k]$;
that is, they satisfy properties generalizing
classical properties of Schur functions such
as Pieri and Littlewood-Richardson rules.  The study of the
$s_\lambda^{(k)}$ led to several characterizations
\cite{[LLM],[LM1],[LMproofs]} (conjecturally
equivalent) and to the proof of many of these combinatorial conjectures.
We thus generically call the functions $s_\lambda^{(k)}$ $k$-Schur functions
(at $t=1$), but in
this article consider only the definition presented in \cite{[LMproofs]}.

The Bernstein operators $B_n = \sum_{i \geq 0} (-1)^i h_{n+i}  \,
e_{i}^{\perp}$ (see Section~\ref{sectbern} for the relevant definitions)
allow to
build the Schur functions recursively.  That is,
if $\lambda=(\lambda_1,\dots,\lambda_\ell)$ is a partition, we have
\begin{equation} \label{formulaBclass}
B_{\lambda_1} B_{\lambda_2} \cdots B_{\lambda_\ell} = s_{\lambda} \, ,
\end{equation}
where $s_{\lambda}$ is the Schur function indexed by the partition $\lambda$.

We present in this article a recursion for $k$-Schur functions that
generalizes this recursion.
It is based on
a combinatorial generalization of the Bernstein operators,
\begin{equation} \label{formulaB}
B_{\lambda_1}^{(k)} B_{\lambda_2}^{(k)} \cdots B_{\lambda_\ell}^{(k)} = s_{\lambda}^{(k)} \, ,
\end{equation}
where $\lambda=(\lambda_1,\dots,\lambda_\ell)$ is a partition such that
$\lambda_1 \leq k$.  In this case, the operators $B_i^{(k)}$ are only
defined on certain subspaces of $\Lambda^k$, preventing for instance
the study of the commutation relations that they could have satisfied.
The term
combinatorial is used to emphasize the fact that the operators $B_i^{(k)}$
are defined through their action on certain $k$-Schur functions,
an action which is combinatorially much in the spirit of the action
of the usual $B_i$'s on Schur functions.

Formula \eqref{formulaB} leads immediately to a
combinatorial
interpretation for the expansion coefficients of $k$-Schur functions
in terms of homogeneous symmetric functions.  This interpretation is
particularly relevant given that no Jacobi-Trudi type
determinantal formula has yet been obtained for $k$-Schur functions.

It was shown in \cite{[Lam]} that the $k$-Schur functions form the Schubert basis for
the homology of the affine (loop) Grassmannian of $GL_{k+1}$.  This implies
that the $k$-Littlewood-Richardson coefficients $c_{\mu \nu}^{\lambda, k}
\in \mathbb N$
appearing in
\begin{equation}
s_{\mu}^{(k)} \, s_{\nu}^{(k)} = \sum_{\lambda}  c_{\mu \nu}^{\lambda, k} \,
s_{\lambda}^{(k)}
\end{equation}
are the structure constants for the homology of the affine (loop)
Grassmannian of $GL_{k+1}$.  Furthermore, these
coefficients are known \cite{[LMdual]}
to contain as a subset the Gromow-Witten invariants of the quantum
cohomology ring of the Grassmannian (or equivalently the fusion rules for
the Wess-Zumino-Witten conformal field theories in case A).
Finding combinatorial interpretations for these quantities is still an open
problem.  Given that the Bernstein operators have been used  to derive the
Littlewood-Richardson rule (see \cite{[Ho]}), we hope that the recursion formula for
$k$-Schur functions presented in this article will provide some of
the insight needed for finding
such combinatorial interpretations.

\section{Definitions  }
\subsection{Basic definitions}  Most of the definitions in this
subsection are taken from \cite{[M]}.
A partition $\lambda=(\lambda_1,\dots,\lambda_m)$ is a non-increasing sequence of positive integers. The degree of
$\lambda$ is $|\lambda|=\lambda_1 +\cdots +\lambda_m$ and the length $\ell(\lambda)$ is the number of parts $m$. Each
partition $\lambda$ has an associated Ferrers diagram with $\lambda_i$ lattice squares in the $i^{th}$ row, from the
bottom to top.  For example,
\begin{equation}
\lambda\,=\,(4,2,1)\,=\, {\tiny{\tableau*[scY]{\cr  & \cr & & & }}}
\end{equation}
Given a partition $\lambda$, its conjugate $\lambda'$ is
the diagram obtained by
reflecting  $\lambda$ about the main
diagonal.  A partition $\lambda$ is ``{\it $k$-bounded}'' if $\lambda_1 \leq k$.  Any lattice square in the Ferrers
diagram is called a cell, where the cell $(i,j)$ is in the $i$th row and $j$th column of the diagram. We say that
$\lambda \subseteq \mu$ when $\lambda_i \leq \mu_i$ for all $i$.  The dominance order $\unrhd$ on partitions is defined
by $\lambda\unrhd\mu$ when $\lambda_1+\cdots+\lambda_i\geq \mu_1+\cdots+\mu_i$ for all $i$, and $|\lambda|=|\mu|$.

A skew diagram $\mu/\lambda$, for any partition $\mu$
containing the partition $\lambda$, is the diagram obtained by
deleting the cells of $\lambda$ from $\mu$.
The thick frames below represent (5,3,2,1)/(4,2).
\begin{equation*}
{\tiny{\tableau*[scY]{ \tf \cr \tf & \tf \cr  & &
\tf \cr & & & & \tf }}}
\end{equation*}
The degree of a skew diagram $\mu/\lambda$ is $|\mu|-|\lambda|$.
We say that the skew diagram $\mu/\lambda$ of degree $\ell$
is a horizontal (resp. vertical) $\ell$-strip if it never has two
cells in the same column (resp. row).

A cell $(i,j)$ of a partition $\gg$ with $(i+1,j+1)\not\in \gg$ is called ``{\it extremal }''.  A ``{\it removable}''
corner of partition $\gg$ is a cell $(i,j)\in \gg$ with $(i ,j+1),(i+1,j)\not\in \gg$ and an ``{\it  addable}'' corner
of $\gg$ is a square $(i,j)\not\in \gg$ with $(i ,j-1),(i-1,j)\in \gg$ (note
that $(1,\gg_1+1),(\ell(\gg)+1,1)$ are also considered to be addable corners).
All removable corners are extremal. In the
figure below we have labeled all addable corners with $a$, labeled extremal cells $e$, and framed the removable
corners.
\begin{equation*}
{\tiny{\tableau[scY]{\bl,\bl {{a}} , \bl  ,\bl | \bl,e, \tf e, \bl {{a}} ,\bl | \bl,,{{e}},\tf e,\bl {{}},\bl  , \bl ,
\bl |
 \bl,,,{{e}},\bl {{a}},\bl ,\bl |
\bl,,, {{e}}, e, \tf e,\bl {{a}}}}}
\end{equation*}
The hook-length of a cell $c=(i,j)$ in a partition $\lambda$ is $\lambda_i-j+\lambda_j'-i+1$. That is, the number of
cells in $\lambda$ to the right of $c$ plus the number of cells in $\lambda$ above $c$ plus one.  If, as above,
$\lambda=(5,3,3,2)$, the hook-length of the cell $(1,2)$ is 7.  We will say
that a cell is {\it $k$-bounded} if its hook-length is not larger than $k$.

Recall that a {\it ``$k+1$-core} is a partition that does not contain any $k+1$-hooks (see \cite{[JK]} for a discussion
of cores and residues). An example of a 6-core (with the hook-length of each cell indicated) is:
\begin{equation*}
{\tiny{\tableau[scY]{1|4,2,1|5,3,2|7,5,4,1}}}
\end{equation*}
The ``{\it $k+1$-residue}'' of square  $(i,j)$ is $j-i \mod k+1$. That is, the integer in this square when squares are
periodically labeled with $0,1,\ldots,k$, where zeros lie on the main diagonal.  Here are the 5-residues associated to
$(6,4,3,1,1,1)$
$$
{\tiny{\tableau[scY]{0|1|2|3,4,0|4,0,1,2| 0,1,2,3,4,0}}}
$$

We will need the following basic result on cores \cite{[GKS],[LMcore]}.
\begin{proposition} \label{propo4.1}
Let $\gg$ be a $k+1$-core.
\begin{itemize}
\item[{\rm (1)}]
Let $c$ and $c'$ be extremal cells of $\gg$ with the same $k+1$-residue ($c'$ weakly north-west of $c$).
\begin{itemize}
\item[{(a)}] If $c$ is at the end of its row, then so is $c'$.
\item[{(b)}] If $c$ has a cell above it, then so does $c'$.
\end{itemize}
\item[{\rm (2)}] Let $c$ and $c'$ be extremal cells of
$\gg$ with the same $k+1$-residue ($c'$ weakly south-east of $c$).
\begin{itemize}
\item[{(a)}] If  $c$ is at the top of its column, then so is $c'$.
\item[{(b)}] If $c$ has a cell to its right, then so does $c'$.
\end{itemize}
\item[{\rm (3)}] A $k+1$-core $\gg$ never has both a removable corner and an addable corner
of the same $k+1$-residue.
\end{itemize}
\end{proposition}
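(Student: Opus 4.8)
The plan is to pass to the abacus (beta-set) encoding of $\gamma$, where the defining property of a $(k+1)$-core becomes a single transparent condition and all four boundary conditions in the statement turn into membership assertions about a set of integers. Write $B=\{\gamma_i-i:\ i\ge 1\}$, a set of strictly decreasing integers containing all sufficiently negative integers and no sufficiently large ones, and let $\chi$ be its indicator function. I will use the classical fact (see \cite{[JK]}) that $\gamma$ has a cell of hook length $h$ if and only if there is an $x\in B$ with $x-h\notin B$; consequently $\gamma$ is a $(k+1)$-core if and only if $x\in B\Rightarrow x-(k+1)\in B$, that is, beads propagate downward by $k+1$ and, equivalently, gaps propagate upward by $k+1$. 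Call this condition $(\ast)$. Since the $(k+1)$-residue of a cell of content $d=j-i$ is $d\bmod(k+1)$, two extremal cells have the same residue precisely when their diagonals differ by a multiple of $k+1$.

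The key step --- and the one requiring the most care --- is the dictionary between the local geometry at an extremal cell and the set $B$. Each nonempty diagonal $d$ carries exactly one extremal cell $(i,j)$, namely the outermost cell of that diagonal, so that $\gamma_{i+1}\le j\le \gamma_i$; I claim this cell lies at the end of its row iff $\gamma_i-i=d$, i.e.\ $d\in B$, and it has a cell above it iff $\gamma_{i+1}-(i+1)=d-1$, i.e.\ $d-1\in B$. The complementary conditions ``has a cell to its right'' and ``is at the top of its column'' then read $d\notin B$ and $d-1\notin B$ respectively. In the same vein a removable corner on diagonal $d$ corresponds to $\chi(d)=1,\ \chi(d-1)=0$, and an addable corner on diagonal $d$ (including the two boundary corners $(1,\gamma_1+1)$ and $(\ell(\gamma)+1,1)$) to $\chi(d-1)=1,\ \chi(d)=0$. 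Establishing these equivalences is just a matter of unwinding definitions, but one must carefully track empty rows and diagonals and the boundary addable corners so that no spurious correspondences creep in; this bookkeeping is the main obstacle.

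With the dictionary in hand the three parts are immediate applications of $(\ast)$. For part (1) the hypothesis that $c'$ is weakly north-west of $c$ forces $j'-i'\le j-i$, so together with equality of residues the diagonals satisfy $d'=d-m(k+1)$ for some $m\ge 1$; then ``$c$ at the end of its row'' reads $d\in B$, and iterating $(\ast)$ $m$ times gives $d'\in B$, i.e.\ $c'$ is at the end of its row, proving (a), while replacing $d$ by $d-1$ proves (b). Part (2) is the exact contrapositive of part (1) (equivalently, it follows by conjugating $\gamma$, which preserves the $(k+1)$-core property and negates residues), because for an extremal cell ``top of its column'' is the negation of ``has a cell above it'' and ``has a cell to its right'' the negation of ``end of its row''. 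Finally, for part (3), suppose $\gamma$ had a removable corner on diagonal $d_R$ and an addable corner on diagonal $d_A$ of the same residue; then $\chi(d_R)=1$, $\chi(d_R-1)=0$, $\chi(d_A-1)=1$, $\chi(d_A)=0$ and $d_R\equiv d_A\pmod{k+1}$. One checks $d_R\ne d_A$, and in either case $d_A>d_R$ or $d_A<d_R$ the configuration consists of a bead and a gap lying a positive multiple of $k+1$ apart: applying $(\ast)$ to the bead produces a bead at a position where the dictionary forced a gap, contradicting $(\ast)$ and completing the proof.
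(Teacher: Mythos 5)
Your proof is correct, but note that the paper itself offers no proof to compare against: Proposition~\ref{propo4.1} is quoted there as a known basic result with a citation to \cite{[GKS]} and \cite{[LMcore]}, where the published arguments work directly on the core's diagram via hook-length manipulations. Your abacus route is a genuinely different (and standard) alternative: encoding $\gamma$ by $B=\{\gamma_i-i\}$ turns the $(k+1)$-core condition into the single propagation rule $x\in B\Rightarrow x-(k+1)\in B$, and your dictionary correctly reduces all four boundary conditions to membership of $d$ or $d-1$ in $B$ (I checked the delicate points: the identification ``end of row $\iff d\in B$'' for the unique extremal cell on diagonal $d$ does follow from the strict monotonicity of $i\mapsto \gamma_i-i$, the characterization of addable corners as $\chi(d-1)=1,\chi(d)=0$ covers the two boundary corners $(1,\gamma_1+1)$ and $(\ell(\gamma)+1,1)$, and the underlying hook-length fact ``$h$ occurs $\iff \exists x\in B$ with $x-h\notin B$'' is the classical one from \cite{[JK]}). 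Two trivial touch-ups: in part (1) you should allow $m\ge 0$, since ``weakly north-west'' includes $c'=c$; and in part (2) it is worth saying explicitly that the contrapositive argument swaps the roles of $c$ and $c'$ (weakly south-east of $c$ means $d'=d+m(k+1)$, and a bead at $d'-1$ propagates down to $d-1$). What your approach buys is uniformity — parts (1), (2), (3) all become one-line instances of the same propagation rule, whereas the diagrammatic proofs treat the row/column statements separately; what it costs is exactly the bookkeeping you flagged, which must be done carefully since the whole proof rests on the dictionary being exact.
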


\def \CC {{\mathcal C}}
\def \CP {{\mathcal P}}
\def \CCk {\CC_{k+1}}

\subsection{Bijection: $k+1$-cores and $k$-bounded partitions}
Let $\CC^{k+1}$ and $\CP^k$ respectively denote the collections of $k+1$ cores and $k$-bounded partitions. There is a
bijective correspondence between $k+1$-cores and  $k$-bounded partitions that was defined in \cite{[LMcore]} by the
map:
$$
\kbnd: \CC^{k+1}\to \CP^k\quad \text{where}\quad \kbnd\left(\gg\right) = (\lambda_1,\ldots,\lambda_\ell) \,,
$$
with $\lambda_i$ denoting the number of cells with a $k$-bounded hook in row $i$ of $\gamma$.  Note that the number of
$k$-bounded hooks in $\gamma$ is $|\lambda|$.  The inverse map
$\core=\kbnd^{-1}$
relies on constructing a certain skew diagram $\gg/\rho$
from $\lambda$, and setting $\core(\lambda)=\gamma$. These special skew diagrams are defined:

\begin{definition} \label{def5}
For $\lambda\in\mathcal P^k$, the ``$k$-skew diagram of $\lambda$" is the diagram $\lambda/^k=\gg/\rho$ where

(i) the number of cells in
row $i$ of $\lambda/^k$ is $\lambda_i$ for $i=1,\ldots,\ell(\lambda)$

(ii) no cell of $\lambda/^k$ has hook-length exceeding $k$

(iii) all cells of $\rho$ have hook-lengths exceeding $k+1$ (when considered
in $\lambda$).

\noindent
\end{definition}

A convenient algorithm for constructing the diagram of $\lambda/^k$ is given by successively attaching a row of length
$\lambda_i$ to the bottom of $(\lambda_1,\dots,\lambda_{i-1})/^k$ in the leftmost position so that no hook-lengths
exceeding $k$ are created.

\begin{example} \label{exskew}
Given $\lambda =(4,3,2,2,1,1)$ and $k=4$,
\begin{equation*}
\lambda = {\tiny{\tableau*[scY]{  \cr  \cr  & \cr & \cr & & \cr & & & }}} \quad \Longleftrightarrow \quad \lambda/^4 =
{\tiny{\tableau*[scY]{ \cr \cr  & \cr \bl &  & \cr  \bl &\bl & & & \cr \bl &\bl& \bl & \bl &\bl & & & & }}} \qquad\iff
\quad \; \core(\lambda)= {\tiny{\tableau*[scY]{ \cr \cr  & \cr  &  & \cr  & & & & \cr  && & & & & & & }}}
\end{equation*}
\end{example}

\subsection{Affine symmetric group}
The affine symmetric group $\tilde S_{k+1}$ is generated by the $k+1$ elements $\sigma_0,\dots, \sigma_{k}$
satisfying the affine Coxeter relations:
\begin{eqnarray} \label{coxeter}
\sigma_i^2  =  id, \qquad \sigma_i \sigma_j = \sigma_j \sigma_i \quad \; (i-j\neq \pm 1\mod k+1), \quad\text{and}\quad
\sigma_i \sigma_{i+1} \sigma_{i} =  \sigma_{i+1} \sigma_i \sigma_{i+1} \, .
\end{eqnarray}
Here, and in what follows, $\sigma_i$ is understood as $\sigma_{i \! \! \! \mod  \! k+1}$ if $i\geq k+1$. Elements of
$\tilde S_{k+1}$ are called affine permutations, or simply permutations.
The length of an element $\sigma \in \tilde S_{k+1}$ is the smallest number
$\ell$ such that $\sigma$ can be written as
$\sigma = \sigma_{i_1} \cdots \sigma_{i_{\ell}}$ for some $i_1,\dots,i_{\ell}
\in \{0,1,\dots,k \}$.

Let $S_{k+1}$ be the subgroup of $\tilde S_{k+1}$ generated by $\sigma_1,\dots,\sigma_k$
(and thus isomorphic to the usual symmetric group on $k$ elements).
It is known that the minimal length (left) coset representatives
in the quotient $\tilde S_{k+1}/S_{k+1}$
are in bijective
correspondence with $k+1$-cores (and thus with $k$-bounded partitions).  There is a natural action of the affine
symmetric group on cores that accounts for this relation \cite{[MM],[L]}:
\begin{definition}
\label{kcore} The {\it ``operator $\sigma_i$"} acts on a $k+1$-core by:
\begin{itemize}
\item[{(a)}] removing all removable corners with $k+1$-residue $i$ if
there is at least one removable corner of $k+1$-residue $i$

\item[{(b)}] adding all addable corners with $k+1$-residue $i$ if
there is at least one addable corner with $k+1$-residue $i$

\end{itemize}
\end{definition}
In that correspondence, if $\sigma = \sigma_{i_1} \cdots \sigma_{i_\ell} $ is
a minimal length coset
representative in $\tilde S_{k+1}/S_{k+1}$, then $\sigma_{i_1} \cdots \sigma_{i_\ell}(\emptyset)$ is the corresponding core (where
$\emptyset$ is the empty core).  Given this correspondence, in what follows we will not distinguish between cores and
minimal length coset representatives
in $\tilde S_{k+1}/S_{k+1}$.  We will also write $\sigma \in \tilde
S_{k+1}/S_{k+1}$ to mean that  $\sigma \in \tilde S_{k+1}$ is a minimal
length
coset representative.  Note that $\sigma_i$  is not
defined when there
are no addable or removable corners of residue $i$.
In this case, the corresponding permutation
is not a minimal length coset representative in
$\tilde S_{k+1}/S_{k+1}$, and will thus be of no concern to
us.

\begin{example}
Given $k=3$, the product $\sigma_2 \sigma_3 \sigma_1 \sigma_0 \in \tilde S_{4}/S_{4}$ corresponds to the $4$-core
$\gamma=(3,1,1)$, since:
\begin{equation*}
\sigma_2 \sigma_3 \sigma_1 \sigma_0 (\emptyset)=\sigma_2 \sigma_3 \sigma_1
\left(\, {\tiny{\tableau[scY]{0}}} \, \right)=\sigma_2 \sigma_3 \left(\,
  {\tiny{\tableau[scY]{0,1}}}\, \right) =\sigma_2 \left(\,
  {\tiny{\tableau[scY]{3|0,1}}} \,
\right)={\tiny{\tableau[scY]{2|3|0,1,2}}}
\end{equation*}
\end{example}

Given $r<s \in \mathbb Z$, we can define the transposition $t_{r,s} \in \tilde
S_{k+1}$ by
\begin{equation}\label{trs:first}
t_{r,s} = \sigma_{r} \sigma_{r+1} \cdots \sigma_{s-2} \sigma_{s-1} \sigma_{s-2} \cdots \sigma_{r+1} \sigma_{r}
\end{equation}
It is easy to see that  $t_{r,s}$ is an involution.  Furthermore, if $s-r<k+1$, then it is easy to see using the
Coxeter relations that
\begin{equation} \label{trs:second}
t_{r,s} = \sigma_{s-1} \sigma_{s-2} \cdots \sigma_{r+1} \sigma_{r} \sigma_{r+1} \cdots \sigma_{s-2} \sigma_{s-1} \, .
\end{equation}
Now, given two $k+1$-cores $\delta$ and $\gamma$, we say that $\delta\lessdot
\gamma$ if there exists  a transposition $t_{r,s}$  such that $t_{r,s} \delta=\gamma$ and
$|\kbnd(\gamma)|=|\kbnd(\delta)|+1$.  The transitive closure of this relation corresponds in $\tilde S_{k+1}/S_{k+1}$
to the (strong) Bruhat order.

\begin{example}
Let $k=3$ and $\la=(2,1,1)$. Apply $t_{1,3}$ to $\core(\la)=(3,1,1)=\gamma$.
\begin{equation*}
t_{1,3}(\gamma)=\sigma_1 \sigma_2 \sigma_1 \left( \,{\tiny
    \tableau[scY]{2|3|0,1,2}} \, \right)=\sigma_1 \sigma_2 \left( \,
{\tiny \tableau[scY]{1|2|3|0,1,2}} \, \right)= \sigma_1 \left(\, {\tiny
  \tableau[scY]{1|2|3|0,1}} \, \right) =  {\tiny
\tableau[scY]{2|3|0}} = \delta
\end{equation*}
Also note that $t_{1,3}(\delta)=\gamma$. Since $\kbnd(\delta)=(1,1,1)$, then we have that $(1,1,1)\lessdot (3,1,1)$.
\end{example}

Equivalently, it can be shown that $\delta\lessdot \gamma$ iff $\delta \subset \gamma$ and
$|\kbnd(\gamma)|=|\kbnd(\delta)|+1$.   It is also known
\cite{[MM],[L]} that the
Bruhat order, the transitive closure of this relation, is given by $\delta < \gamma$
iff $\delta \subset \gamma$.

The following result is proved in \cite{[LLMS]}.  Note that a ribbon is a connected
skew-diagram that does not contain any $(2,2)$ subdiagram.
\begin{proposition} \label{T:scover} Let $\delta\lessdot \gamma$ be
$k+1$-cores with
$t_{r,s} \delta=\gamma$ and $0<r<s$. Then
\begin{enumerate}
\item $s-r<k+1$.
\item Each connected component of $\gamma/\delta$ is a ribbon with $s-r$
cells in diagonals of $k+1$-residues ${r},{r+1},\dotsc,{s-1}$.
\item The components are translates of each other and their heads
lie on ``consecutive" diagonals of $k+1$-residue ${s-1}$.
\end{enumerate}
\end{proposition}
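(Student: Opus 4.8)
The plan is to read the action of $t_{r,s}$ off the abacus (Maya diagram) of the core and then extract the ribbon structure from the covering hypothesis. Encode the $k+1$-core $\dd$ by its beta-numbers $\{\dd_i-i\}\subseteq\Z$, grouped into the $k+1$ \emph{runners} indexed by residue $\bmod\,k+1$; the core condition is exactly that each runner is flush (its beads occupy a lower segment), so $\dd$ is recorded by the $k+1$ top levels $M_0,\dots,M_k$. Under the residue action of Definition~\ref{kcore} each $\sigma_i$ is an elementary bead swap, so $t_{r,s}$ interchanges the beads at the paired positions $r+m(k+1)$ and $s+m(k+1)$ for all $m\in\Z$. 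Proposition~\ref{propo4.1} is the core-theoretic shadow of flushness: it pins down, for each residue, the relative positions of the removable, addable, and extremal cells, and I will lean on it to avoid explicit abacus bookkeeping. I will also use the known identity that $|\kbnd(\cdot)|$ is the Coxeter length of the associated minimal coset representative, so that the hypothesis $|\kbnd(\gamma)|=|\kbnd(\dd)|+1$ says precisely that $t_{r,s}$ is a \emph{covering reflection} for $\dd$ in the strong (Bruhat) order.

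First I would settle (2). Granting $s-r<k+1$ (proved below), the positions $r$ and $s$ lie on two distinct runners, and $\gamma\supsetneq\dd$ forces beads to move from the runner of $r$ to the runner of $s$; the beads that actually move are those at the consecutive top levels $m$ with $M_{\bar s}<m\le M_{\bar r}$, each sliding from $r+m(k+1)$ to $s+m(k+1)$, a displacement of exactly $s-r$. A single bead sliding from $p$ to $p'>p$ changes $|\cdot|$ by exactly $p'-p$ and adds a connected border strip, regardless of the intervening beads; hence each move contributes one ribbon of exactly $s-r$ cells. Since the displacement is $s-r<k+1$, the strip meets $s-r$ consecutive diagonals, one cell on each, carrying the $k+1$-residues $r,r+1,\dots,s-1$. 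This is (2); that these strips are exactly the connected components of $\gamma/\dd$ follows once (3) shows they are pairwise disjoint translates.

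For (1) and (3) I would feed in the covering hypothesis through its reflection form. The classical criterion for $t_{r,s}$ to be a Bruhat cover of $\dd$ is that no value strictly between $r$ and $s$ be \emph{nested}, i.e.\ on the abacus no intermediate runner top $M_c$ (with $c$ one of the residues $r+1,\dots,s-1$) lies strictly inside $(M_{\bar s},M_{\bar r})$. This gives (1) at once: if $s-r\ge k+1$ then $a:=r+(k+1)$ satisfies $r<a<s$ and shares the runner of $r$, so its position is automatically nested between those of $r$ and $s$, contradicting the criterion (the boundary case $s-r=k+1$ being excluded since swapping two levels of a single flush runner cannot return a flush runner with $\gamma\supsetneq\dd$). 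The same criterion gives (3): the shape of the strip produced at level $m$ is determined solely by the bead/gap pattern on the intervening runners between $r+m(k+1)$ and $s+m(k+1)$, and the absence of any intermediate top inside $(M_{\bar s},M_{\bar r})$ means each such runner has constant bead-status across all moving levels. Hence the strips are exact translates under the shift by $k+1$ (one diagonal of each residue), and their residue-$(s-1)$ cells—the heads—occupy the residue-$(s-1)$ diagonals at contents differing by successive multiples of $k+1$, i.e.\ on ``consecutive'' such diagonals, as claimed.

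The main obstacle is precisely the translation used in the previous paragraph: converting the analytic hypothesis ``$|\kbnd|$ increases by exactly one'' into the clean combinatorial nesting statement. The difficulty is that $|\kbnd|$ is \emph{not} locally additive along $\gamma/\dd$—inserting a ribbon can raise the hook-length of pre-existing cells past $k$ and destroy their $k$-boundedness—so one cannot simply count added cells. This is where Proposition~\ref{propo4.1} does the real work: parts (1) and (2) of that proposition align the equal-residue extremal cells of $\dd$ and $\gamma$, which is the geometric form of the flush/nesting condition, while part (3) forbids the removable/addable clashes that would signal a length jump larger than one. Carrying out this conversion carefully—tracking the removable and addable corners of each residue $r,\dots,s-1$ met while applying the word \eqref{trs:second} to $\dd$, and checking against Proposition~\ref{propo4.1} that the net effect is a single graded step—is the crux on which (1)--(3) rest.
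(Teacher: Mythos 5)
First, a point of reference: the paper itself does not prove Proposition~\ref{T:scover} --- it is quoted from \cite{[LLMS]} --- so there is no in-paper argument to match your attempt against; your abacus route must be judged on its own. The framework you set up is the right classical one, and your derivations of (2) and of the translate statement in (3) are correct \emph{conditional} on the non-nesting characterization of covers (modulo a harmless off-by-one: with the residue action of Definition~\ref{kcore}, $t_{r,s}$ swaps bead positions congruent to $r-1$ and $s-1$, not $r$ and $s$; this is exactly why the ribbon residues come out as $r,\dots,s-1$).

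The genuine gap is the one you yourself flag in your last paragraph and never close: converting the hypothesis $|\kbnd(\gamma)|=|\kbnd(\delta)|+1$ into the non-nesting condition on runner tops \emph{is} the content of the proposition, and you assert it by appeal to ``the classical criterion'' for Bruhat covers. That criterion characterizes $\ell(t_{r,s}w)=\ell(w)+1$ in the group $\tilde S_{k+1}$, whereas your hypothesis concerns $|\kbnd|$, i.e.\ the length of \emph{minimal coset representatives}: $t_{r,s}w_\delta$ need not be the minimal representative of its coset, and the quotient length change under a reflection need not even be odd. A concrete example showing the danger: for $k=2$, the single reflection $t_{1,5}$ (so $s-r=4>k+1$) maps the $3$-core $(2,2,1,1)$ to the $3$-core $(5,3,1,1)$, adding one connected ribbon of size $4$, with $|\kbnd|$ jumping from $4$ to $6$ --- an \emph{even} jump of $2$. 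This same example defeats your proof of (1) as written: reflections with $s-r\ge k+2$ do act core-to-core, adding long ribbons, and whether the translate value $r+(k+1)$ is ``nested'' depends on the bead statuses rather than being automatic; what must actually be shown is that such a move always changes $|\kbnd|$ by at least $2$ (as it does here), and that computation --- tracking how a ribbon of size exceeding $k+1$ both creates $k$-bounded hooks and destroys pre-existing ones, which is precisely the non-local behavior of $|\kbnd|$ you correctly identify as the crux --- is deferred and never carried out. Until that bookkeeping is done (or a correct quotient-level cover criterion is proved, e.g.\ via Proposition~\ref{propo4.1} as you propose), parts (1) and (3), and hence the identification of the components in (2), remain unestablished.
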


\subsection{$k$-Schur functions}
We now present the characterization of $k$-Schur functions given in
\cite{[LMproofs]}.

\begin{definition}
\label{defktabgen}
Let $\gg$ be a $k+1$-core with $m$ $k$-bounded hooks and  let
$\aa=(\aa_1,\ldots,\aa_r)$ be a composition of $m$.
A ``$k$-tableau" of shape $\gg$ and ``$k$-weight" $\aa$
is a filling of $\gg$ with integers $1,2,\ldots,r$ such that

\smallskip
\noindent
(i) rows are weakly increasing and columns are strictly increasing

\smallskip
\noindent
(ii) the collection of cells filled with letter $i$ are labeled by exactly
$\alpha_i$ distinct $k+1$-residues.
\end{definition}

\begin{example}
\label{exssktab}
The $3$-tableaux of $3$-weight $(1,3,1,2,1,1)$ and shape $(8,5,2,1)$ are:
\begin{equation*}
{\tiny{\tableau*[scY]{5\cr 4&6\cr2&3&4&4&6\cr 1&2&2&2&3&4&4&6 }}} \qquad {\tiny{\tableau*[scY]{6\cr 4&5\cr 2&3&4&4&5\cr
1&2&2&2&3&4&4&5 }}} \qquad {\tiny{\tableau*[scY]{4\cr 3&6\cr 2&4&4&5&6\cr 1&2&2&2&4&4&5&6 }}}
\end{equation*}
\end{example}

\medskip

\begin{remark}
\label{ktabtab}
When $k$ is large, a $k$-tableau $T$ of shape $\gamma$ and $k$-weight
$\mu$ is a semi-standard tableau of weight $\mu$
since no two diagonals of $T$ will have the same residue.
\end{remark}

We denote the
set of all $k$-tableaux of shape $\core(\mu)$ and $k$-weight $\aa$
by $\mathcal T^k_{\aa}(\mu)$, and define  the ``$k$-Kostka numbers" as:
\begin{equation}
K_{\mu\alpha}^{(k)}\;=\;|T^k_\aa(\mu)|
\,.
\end{equation}
As is the case for the Kostka number, they
are such that $K_{\mu\alpha}^{(k)}=K_{\mu\gamma}^{(k)}$ if $\gamma$ is
a permutation of $\alpha$, and
satisfy a triangularity property.
\begin{property}
\label{trikostka}
For any $k$-bounded partitions $\lambda$ and $\mu$,
\begin{equation}
K_{\mu\lambda}^{(k)}=0 \quad\text{when}\quad \mu \ntrianglerighteq
\lambda \quad
\text{ and }\quad K_{\mu\mu}^{(k)}=1\,.
\end{equation}
\end{property}

Thus the matrix $||K^{(k)}||_{\mu,\lambda}$ (with $\mu$ and $\lambda$ running
over all $k$-bounded partitions of a given degree) is invertible,
naturally giving rise to a family of symmetric functions.
\begin{definition}
\label{kschurdef}
The ``$k$-Schur functions", indexed by $k$-bounded partitions,
are defined as forming the unique basis of
$\Lambda^{k}=\mathbb Z[h_1,\ldots,h_k]$ such that:
\begin{equation}
\label{e1}
h_\lambda = s_\lambda^{(k)}+\sum_{\mu : \mu\rhd\lambda} K_{\mu\lambda}^{(k)}
s_\mu^{(k)}\,\quad\text{for all } \lambda \text{ such that }
\lambda_1 \leq k\,.
\end{equation}
\end{definition}

\medskip

From this $k$-tableau characterization,
many properties of $k$-Schur
functions are derived in \cite{[LMproofs]}.  Of these properties,
the only one relevant to this work is the $k$-Pieri rule, which we now present
in the form given in \cite{[LLMS]}.

Any {\it proper} subset $A \subsetneq \mathbb Z_{k+1}=\{0,\dots,k \}$ decomposes into unions $I_1\cup I_2\cup \cdots
\cup I_m$ of maximal cyclic intervals. For each cyclic component $[a,b]$ of
$A$, we will let $\sigma_A$ be equal to the product of factors
$\sigma_b \sigma_{b-1} \cdots \sigma_a$ (observe the descending order of the
indices).
For instance, if $k=8$ and $A=
\{0,1,3,4,6,8\}$, we have that the cyclic components are $[8,1]=\{8,0,1\}$, $[3,4]=\{3,4\}$ and $[6,6]=\{ 6\}$.  The
corresponding element $\sigma_A$ is thus equal to $\sigma_{1}\sigma_{0}\sigma_8 \sigma_{4} \sigma_3 \sigma_6= \sigma_4
\sigma_3 \sigma_6 \sigma_{1}\sigma_{0}\sigma_8=\cdots$ (the components commute among themselves).

\begin{proposition} Let $h_\ell$ be the $\ell^{th}$
  complete symmetric function.  Furthermore, let $\lambda$ be a $k$-bounded partition, and
$\gamma=\core(\lambda)$ be its corresponding $k+1$-core. Then, if $\ell \leq
k$, the $k$-Schur functions satisfy the
$k$-Pieri rule:
\begin{equation} \label{kpieriform}
h_\ell \, s_{\lambda}^{(k)} = \sum_{A} s_{\kbnd (\sigma_A(\gamma))}^{(k)}
\end{equation}
where the sum is over all subsets $A$ of $\mathbb Z_{k+1}$ of cardinality $\ell$ such that $\kbnd (\sigma_A(\gamma))$ is a
partition of size $\ell+|\lambda|$.
\end{proposition}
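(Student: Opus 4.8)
The plan is to deduce the rule entirely from the $k$-tableau description of the $k$-Kostka numbers together with the defining unitriangular relation \eqref{e1}, so that the only genuinely new input is a combinatorial identity relating the addition of a single (largest) letter in a $k$-tableau to the operators $\sigma_A$. Writing $\gamma=\core(\lambda)$, for $k+1$-cores $\gamma'\subseteq\gamma$ let $n_\ell(\gamma',\gamma)$ denote the number of subsets $A\subseteq\mathbb Z_{k+1}$ with $|A|=\ell$, $\sigma_A(\gamma')=\gamma$, and $|\kbnd(\gamma)|=|\kbnd(\gamma')|+\ell$; grouping the right-hand side of \eqref{kpieriform} by its output core, it equals $\sum_\nu n_\ell(\gamma,\core(\nu))\,s_\nu^{(k)}$.

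The combinatorial core is the $k$-Kostka recursion
\[
K_{\nu,(\lambda,\ell)}^{(k)}=\sum_{\rho} K_{\rho\lambda}^{(k)}\,n_\ell\bigl(\core(\rho),\core(\nu)\bigr),
\]
where $(\lambda,\ell)$ is the composition obtained by appending $\ell$ to $\lambda$ and $\rho$ runs over $k$-bounded partitions of $|\lambda|$. To prove it I would set up a bijection on the set of $k$-tableaux of shape $\core(\nu)$ and $k$-weight $(\lambda,\ell)$: deleting every cell carrying the largest letter leaves a filling which, by condition (i) of Definition~\ref{defktabgen}, is a $k$-tableau of some shape $\core(\rho)$ and $k$-weight $\lambda$, while condition (ii) guarantees that the deleted cells meet exactly $\ell$ distinct $k+1$-residues, recording a set $A$ with $|A|=\ell$. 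The content of the lemma is that this deletion is exactly inverse to applying $\sigma_A$, i.e. that $\core(\nu)=\sigma_A(\core(\rho))$, and conversely that whenever $\sigma_A(\core(\rho))$ is a core whose $k$-bounded hook count exceeds that of $\core(\rho)$ by $\ell=|A|$, re-inserting the largest letter on the cells of $\sigma_A(\core(\rho))/\core(\rho)$ produces a legal $k$-tableau.

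This equivalence is where I expect the real work to lie, and it is exactly what Propositions~\ref{propo4.1} and \ref{T:scover} are designed to supply. The row-weak, column-strict conditions on the top letter force the added cells to sit at the ends of rows with no two in the same column, and Proposition~\ref{propo4.1} (the control of extremal cells of equal residue in a core) translates this into the statement that, residue by residue, the addable corners filled are precisely those that a factor $\sigma_i$ of $\sigma_A$ adds (Definition~\ref{kcore}); the descending order of the indices within each cyclic interval $[a,b]$ in the definition of $\sigma_A$ matches the column-strictness, while Proposition~\ref{T:scover} identifies each elementary step with a ribbon, pinning down the shape $\sigma_A(\core(\rho))/\core(\rho)$ and confirming that $|A|=\ell$ corresponds to a net gain of $\ell$ $k$-bounded hooks. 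A useful consistency check is Remark~\ref{ktabtab}: for $k$ large a $k$-tableau is an ordinary semistandard tableau, the sets $A$ reduce to single residues, and the identity collapses to the classical Pieri rule.

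Finally I would convert the $k$-Kostka recursion into \eqref{kpieriform} by a downward induction on the dominance order within a fixed degree. Expanding $s_\lambda^{(k)}$ via \eqref{e1}, multiplying by $h_\ell$, and using $h_\ell h_\lambda=h_{(\lambda,\ell)}=\sum_\nu K_{\nu,(\lambda,\ell)}^{(k)}s_\nu^{(k)}$, one gets $h_\ell s_\lambda^{(k)}=\sum_\nu K_{\nu,(\lambda,\ell)}^{(k)}s_\nu^{(k)}-\sum_{\mu\rhd\lambda}K_{\mu\lambda}^{(k)}\,h_\ell s_\mu^{(k)}$. Replacing the first sum by the recursion and invoking $K_{\lambda\lambda}^{(k)}=1$ together with the vanishing $K_{\rho\lambda}^{(k)}=0$ for $\rho\ntrianglerighteq\lambda$ from Property~\ref{trikostka}, the contribution indexed by $\rho\rhd\lambda$ equals $\sum_{\rho\rhd\lambda}K_{\rho\lambda}^{(k)}\sum_\nu n_\ell(\core(\rho),\core(\nu))s_\nu^{(k)}$, which by the inductively known Pieri expansions of the $s_\rho^{(k)}$ cancels against the subtracted term. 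What survives is precisely $\sum_\nu n_\ell(\gamma,\core(\nu))\,s_\nu^{(k)}=\sum_A s_{\kbnd(\sigma_A(\gamma))}^{(k)}$, which is \eqref{kpieriform}; the base case is the dominance-maximal $\lambda$, for which the subtracted sum is empty.
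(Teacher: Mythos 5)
The paper itself does not prove this proposition: it is quoted in the form given in \cite{[LLMS]}, with Remark~\ref{remarkstrip} recording its equivalence to the $k$-Pieri rule of \cite{[LMproofs]}, so there is no internal proof to compare yours against line by line. Measured against the proofs in those references, your architecture is essentially the original one: the unitriangular induction downward in dominance order, using \eqref{e1}, Property~\ref{trikostka}, $h_\ell h_\lambda = h_{(\lambda,\ell)}$ and the permutation-invariance of $K^{(k)}_{\mu\alpha}$ in $\alpha$, with the (unique) dominance-maximal $k$-bounded partition $(k,\dots,k,r)$ as base case, is exactly how \cite{[LMproofs]} extracts the Pieri rule from the $k$-tableau definition; and your dictionary $n_\ell$ between horizontal strips occupying exactly $\ell$ distinct residues and operators $\sigma_A$ with $|A|=\ell$ is precisely the \cite{[LLMS]} equivalence that Remark~\ref{remarkstrip} cites, so within this paper's economy you could invoke that remark rather than reprove it via Propositions~\ref{propo4.1} and~\ref{T:scover}. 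The algebraic half of your argument is sound: the cancellation of the $\rho\rhd\lambda$ terms against the inductively known expansions checks out, and the regrouping of the right-hand side of \eqref{kpieriform} as $\sum_\nu n_\ell(\gamma,\core(\nu))\,s_\nu^{(k)}$ is definitional, independent of whether $n_\ell\le 1$.

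There is one genuine gap in the combinatorial half as you wrote it: condition (i) of Definition~\ref{defktabgen} only guarantees that deleting all cells carrying the largest letter leaves a Young-diagram shape and that the deleted cells form a horizontal strip; it does \emph{not} give that the leftover shape is again a $k+1$-core, and your bijection is not even well posed without this, since the set $A$ is read off residues on a core and $\sigma_A$ is defined only on cores. Core-ness of the shape of the restriction of a $k$-tableau to letters $\le i$ is a standalone lemma, proved in \cite{[LMcore],[LMproofs]} by residue arguments of the type in Proposition~\ref{propo4.1} (a $k+1$-hook created by the deletion would force the top-letter strip to repeat a residue or place two cells in one column, contradicting conditions (i)--(ii)). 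You do flag the strip-versus-$\sigma_A$ equivalence as ``where the real work lies'' and point at the right tools — the descending order within each cyclic component matching horizontal growth, Proposition~\ref{T:scover} controlling the added ribbons, and $|A|=\ell$ matching the gain of $\ell$ $k$-bounded hooks — but the restriction lemma must be proved or cited separately before those tools apply; once it is, your sketch closes up into a correct reconstruction of the literature proof.
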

\begin{remark} \label{remarkstrip}
This formulation of the $k$-Pieri rule
is equivalent to the one presented in \cite{[LMproofs]}.
In that case,  the $k$-Pieri rule can be interpreted as
\begin{equation}
h_{\ell} \, s_{\lambda}^{(k)} = \sum_{\mu} s_{\mu}^{(k)} \, ,
\end{equation}
where the sum is over all $k$-bounded partitions $\mu$ such that
$\core(\mu)/\core(\lambda)$ is a horizontal strip with exactly $\ell=|\mu|-|\lambda|$
distinct residues.  In \cite{[LLMS]}, it is shown that this condition is equivalent to
$\core(\mu)$ being equal
to $\sigma_A (\core(\lambda))$ for $A$ a subset of $\mathbb Z_{k+1}$ of cardinality $\ell$.
\end{remark}

\begin{example}
We illustrate the $k$-Pieri rule for $k=6$ by doing the product of
$h_4$ and $s_{(4,3,2,2,2,1)}^{(6)}$. First,
we give
the Ferrers diagram of the $k+1$-core $\core(\lambda)$
associated to $\la=(4,3,2,2,2,1)$, with
residues on the addable positions and
frames on the $k$-bounded cells.
\begin{equation*}
\tiny \tableau[scY]{~1|*,~3|*,*|*,*|*,*,~0,~1|,*,*,*,~3,~4|,,*,*,*,*,~6,~0,~1,~2}
\end{equation*}
We then show the possible subsets $A$
of $\mathbb Z_{k+1}$ of cardinality $\ell = 4$
such that that $\kbnd(\sigma_A(\core(\lambda)))$ is a
partition of 18:
\begin{equation*}
\begin{array}{ccccccc}
{\tiny \tableau[scY]{*1|*,~3|*,*|*,*|,*,*0,*1|,*,*,*,~3,~4|,,,,*,*,*6,*0,*1,*2}} & & &
{\tiny\tableau[scY]{*1|*,*3|*,*|*,*|,*,*0,*1|,,*,*,*3,~4|,,,,*,*,*6,*0,*1,~2}} & & &
{\tiny\tableau[scY]{~1|*,*3|*,*|*,*|*,*,*0,~1|,,*,*,*3,*4|,,*,*,*,*,*6,*0,~1,~2}}
\\
\text{\scriptsize $\{ 6,0,1,2 \}$} & & & \text{\scriptsize $ \{ 6,0,1,3 \}$} & & & \text{\scriptsize $\{ 6,0,3,4 \}$}
\end{array}
\end{equation*}

\begin{equation*}
\begin{array}{ccc c}
{\tiny \tableau[scY]{*1|*,*3|*,*|*,*|*,*,~0,~1|,,*,*,*3,*4|,,*,*,*,*,*6,~0,~1,~2}} & & &
{\tiny\tableau[scY]{*1|*,*3|*,*|*,*|,*,*0,*1|,,*,*,*3,*4|,,*,*,*,*,~6,~0,~1,~2}}
\\
\text{\scriptsize $\{ 6,1,3,4 \}$} & & & \text{\scriptsize $\{ 0,1,3,4 \}$}
\end{array}
\end{equation*}
Therefore,
\begin{equation}
h_4 s_{(4,3,2,2,2,1)}^{(6)}= s^{(6)}_{(6,3,3,2,2,1,1)} + s^{(6)}_{(5,3,3,2,2,2,1)}  + s^{(6)}_{(5,4,3,2,2,2)} +
s^{(6)}_{(5,4,2,2,2,2,1)} + s^{(6)}_{(4,4,3,2,2,2,1)}
\end{equation}

\end{example}

\section{Bernstein operators} \label{sectbern}
The notation used in this section is taken from \cite{[M]}.  For $m$ a nonnegative integer, the operator $e_{m}^{\perp}$ is defined such that given any symmetric functions $f$ and
$g$,
\begin{equation}
\langle e_m^{\perp} \, f, g \rangle = \langle f, e_m \, g \rangle \, ,
\end{equation}
with $e_m$ the $m^{th}$ elementary symmetric function and $\langle \cdot, \cdot \rangle$ the unique scalar product with
respect to which the Schur functions are orthonormal.  It can be shown that the operator $e_m^{\perp}$ has the
following simple action on a Schur function
\begin{equation}
e_m^{\perp} \, s_{\lambda} = \sum_{\mu} s_{\mu} \,
\end{equation}
where the sum is over all partition $\mu$ such that $\lambda/\mu$ is a $m$-vertical strip.

For $n$ a nonnegative integer, the Bernstein operator is \cite{[Z]}
\begin{equation}
B_n = \sum_{i \geq 0} (-1)^i h_{n+i}  \, e_{i}^{\perp} \, ,
\end{equation}
where $h_m$ is the $m^{th}$ complete symmetric function.  The Bernstein operators allow to build the Schur functions
recursively.  That is, for $\lambda= (\lambda_1, \dots, \lambda_\ell)$,
\begin{equation}
B_{\lambda_1} B_{\lambda_2} \cdots B_{\lambda_\ell} \cdot 1 = s_{\lambda} \, .
\end{equation}
Or equivalently, if $\hat \lambda= (\lambda_2, \dots, \lambda_\ell)$, then
\begin{equation} \label{recSchur}
B_{\lambda_1} \, s_{\hat \lambda} = s_{\lambda} \, .
\end{equation}

\section{The main formula}

Note that for the remainder of the article, as it was the case in the previous
section, $\hat \lambda$ will stand for the
partition $\lambda$ without its first part.

Before being able to describe  analogs of these operators for the $k$-Schur
functions, we need some definitions.
Let $\gamma$ be a $k+1$-core, and
let
$x$ be the cell corresponding to the leftmost $k$-bounded cell in the first row of $\gamma$.   If $x$ lies in column
$j$, then let the {\it main subpartition} of $\hat \gamma$ (relatively to $\gamma$), be the subpartition of $\hat
\gamma$  made out of the columns of $\hat \gamma$ from
column $j$ up to column $\gamma_2$ (that is, from column $j$ rightward).
For instance, let $k=6$,  and consider the 7-core
$\gamma=(5,5,3,3,2,2,1,1,1)$.  As illustrated in the following diagram,
where the $k$-bounded cells of
$\gamma$ are in bold face, the leftmost $k$-bounded cell, $x$, in the first row of $\gamma$ is in column 3.  Therefore,
the main subpartition of $\hat \gamma$ is the partition filled with $\circ$'s in the diagram.
$$
{\tiny{\tableau*[scY]{ \tf \cr \tf \cr \tf   \cr \tf & \tf \cr \tf & \tf \cr &
      \tf &  \tf  \cr & \tf &  \tf  \cr
&&\tf  & \tf  & \tf  \cr &&\tf x &\tf &\tf \cr}}} \qquad \implies \qquad {\tiny{\tableau*[scY]{ \tf \cr \tf \cr \tf \cr
\tf & \tf \cr \tf & \tf \cr &
      \tf &  \tf \circ \cr & \tf &  \tf \circ \cr
&&\tf \circ & \tf \circ & \tf \circ \cr}}}
$$
\begin{remark}  It is important to realize that the concept of
main subpartition is only defined for a
$\delta$ such that $\delta=\hat \omega$ for a \underline{given} $k+1$-core
$\omega$.  When using  the term main subpartition
of $\hat \gamma$, it is understood that the larger partition is in this case
$\gamma$.
\end{remark}

\begin{remark} \label{rembounded} The cells in the main subpartition of $\hat \gamma$
are all $k$-bounded.   When $k$ is large enough, the main subpartition of $\hat \gamma$ coincides with $\hat \gamma$.
\end{remark}
\begin{remark} \label{remleft}If in $\hat \gamma$ there are columns to the left of its main
  subpartition, then they are all strictly larger than the largest
column of the main subpartition.  This is because the cell to the left of $x$ in $\gamma$ (see the example above) would
not have otherwise a hook-length larger than $k+1$.
\end{remark}

Recall that $\delta \lessdot \omega$ iff $\delta \subseteq \omega$
and the number of $k$-bounded hooks in $\delta$ is one less than that
in $\omega$.  Also recall from Proposition~\ref{T:scover} that
if $\delta \lessdot \omega$, then $\omega/\delta$ is a union of identical ribbons (of size smaller or equal to $k$)
whose heads (southeast-most cell of the ribbon) occur on consecutive diagonals of a certain $k+1$-residue.
 A ribbon will be horizontal if, as its
name suggests, it coincides with a horizontal partition $(n)$ for some $n$.

\begin{definition}\label{defklstrip}
Let $\gamma$ be a core such that the main subpartition of $\hat \gamma$ is of length $m$. We will say that the core
$\delta$ can be obtained by removing a vertical $(k,\ell)$-strip from $\hat \gamma$ if there exists a sequence of cores
$\hat \gamma=\omega^{(1)}\supset  \omega^{(2)} \supset \cdots \supset \omega^{(\ell+1)}=\delta$ such that
\begin{enumerate}
\item $\omega^{(i+1)} \lessdot \, \omega^{(i)}$  for all $i=1,\dots,\ell$.
\item $\omega^{(i)}/ \omega^{(i+1)}$ is a union of horizontal ribbons, the
lowest of which appears in a row $r_i$ with $1\leq r_i \leq m$.
\item $r_1, \dots, r_\ell$ are all distinct.
\end{enumerate}
\end{definition}

\begin{example} \label{ejemplo vert strip}
Let $k=5$ and consider $\gamma=(6,6,3,3,3,1,1,1,1)$. It can be checked
that the length of the
main subpartition of $\hat \gamma$ is $m=2$.  The $k+1$-core
$\delta=(5,4,3,2,1,1,1,1,1)$ can be obtained from $\hat \gamma$, by
removing the following
vertical $(5,2)$-strip :
\begin{equation*}
\hat{\gamma}=\omega^{(1)}= (6,6,3,3,3,1,1,1,1) \supset \omega^{(2)} = (6,4,3,3,1,1,1,1,1) \supset \omega^{(3)} =
(5,4,3,2,1,1,1,1,1)=\delta.
\end{equation*}
In the following sequence of Ferrers diagrams, we see that all the conditions for a a vertical $(5,2)$-strip are
satisfied. The framed cells correspond to successive ribbons having their
lowest occurrence in different rows and within
the first $m=2$ rows.
\begin{equation*}
{\tiny{\tableau[scY]{||||,*,*|,,|,,|,,,,*,*|,,,,,}}} \quad \supset \quad
{\tiny{\tableau[scY]{||||,~,~|,,*|,,|,,,,~,~|,,,,,*}}} \quad \supset \quad
{\tiny{\tableau[scY]{||||,~,~|,,~|,,|,,,,~,~|,,,,,~}}}
\end{equation*}
\end{example}

\begin{remark} \label{remlowest} By Remark~\ref{remleft}, in condition (2)
  of Definition~\ref{defklstrip},
the lowest ribbons
 are always contained
entirely in the main subpartition of $\hat \gamma$.
\end{remark}
\begin{lemma}\label{lemmaorder}  Suppose we have a vertical $(k,\ell)$-strip $\hat \gamma=\omega^{(1)}\supset  \omega^{(2)}
\supset \cdots \supset \omega^{(\ell+1)}=\delta$, whose lowest ribbons occur in rows $r_1,\dots,r_\ell$.  Then, there
exists a sequence $\hat \gamma=\bar \omega^{(1)}\supset  \bar \omega^{(2)} \supset \cdots \supset \bar
\omega^{(\ell+1)}=\delta$, whose lowest ribbons occur in rows $\bar r_1 > \cdots > \bar r_\ell$.  That is, removing a
vertical $(k,\ell)$ strip can always be done in a certain order (by removing the ribbon whose lowest ribbon is the
highest, then the one whose lowest ribbon is the second highest, and so on).
\end{lemma}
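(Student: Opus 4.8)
The plan is a bubble-sort argument. Call a pair $i<j$ an \emph{inversion} if $r_i < r_j$; since the lowest rows are pairwise distinct by condition (3) of Definition~\ref{defklstrip}, the sequence has no inversions exactly when $r_1>\cdots>r_\ell$. It therefore suffices to show that any \emph{adjacent} inversion, i.e.\ two consecutive steps with $r_i<r_{i+1}$, can be interchanged so as to produce another valid vertical $(k,\ell)$-strip with the same endpoints $\hat\gamma$ and $\delta$ in which the lowest rows at positions $i$ and $i+1$ are swapped. Each such interchange lowers the inversion count by one while leaving every other step (and every other lowest row) untouched, so finitely many of them yield the desired decreasing sequence.

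To set up the local swap, I would isolate the three consecutive cores $\omega^{(i)} \supset \omega^{(i+1)} \supset \omega^{(i+2)}$ and write $R'=\omega^{(i)}/\omega^{(i+1)}$, whose lowest ribbon sits in row $a=r_i$, and $R=\omega^{(i+1)}/\omega^{(i+2)}$, whose lowest ribbon sits in row $b=r_{i+1}$, with $a<b$. The swapped step would be produced by removing the higher family first: set $\tilde\omega=\omega^{(i)}\setminus R$. Since $R\subseteq\omega^{(i+1)}\subseteq\omega^{(i)}$ and $R\cap R'=\emptyset$, one has $\omega^{(i+2)}=\omega^{(i)}\setminus(R\cup R')=\tilde\omega\setminus R'$ automatically, so set-theoretically the endpoints are preserved. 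Because $\delta\lessdot\gamma$ is equivalent to $\delta\subset\gamma$ together with $|\kbnd(\gamma)|=|\kbnd(\delta)|+1$, it remains only to verify that $\tilde\omega$ is a $k+1$-core, that $\tilde\omega\lessdot\omega^{(i)}$ via a removal of horizontal ribbons whose lowest lies in row $b$, and that $\omega^{(i+2)}\lessdot\tilde\omega$ via horizontal ribbons whose lowest lies in row $a$.

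To carry this out I would pin down the two families exactly. By Proposition~\ref{T:scover} each of $R$ and $R'$ is a set of translated ribbons with heads on consecutive diagonals of a fixed $k+1$-residue, here horizontal by condition (2) of Definition~\ref{defklstrip}, and by Remark~\ref{remlowest} the lowest ribbon of each lies inside the main subpartition of $\hat\gamma$. The hypothesis $a<b$ should be what guarantees that $R$ is already removable from $\omega^{(i)}$: the cells of $R$ are at the tops of their columns in $\omega^{(i+1)}$, and passing from $\omega^{(i)}$ to $\omega^{(i+1)}$ only deletes the lower family $R'$; the task is to show this deletion does not occur directly above $R$, so that the column tops of $R$ are the same in $\omega^{(i)}$ as in $\omega^{(i+1)}$. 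Dually, after deleting $R$ the family $R'$ should remain an admissible horizontal-ribbon removal with lowest ribbon in row $a$.

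I expect the crux to be exactly this non-interference: ruling out the configuration in which a higher ribbon of $R'$ lies immediately above a cell of $R$ (which would obstruct removing $R$ first), and conversely checking that deleting $R$ cannot destroy or re-shape a ribbon of $R'$. This is where the core hypothesis is indispensable, and I would control it with Proposition~\ref{propo4.1}, in particular the impossibility of a removable and an addable corner of equal residue in a $k+1$-core together with its constraints relating extremal cells of equal residue, reinforced by Remark~\ref{remleft} on the columns lying to the left of the main subpartition. Once the adjacent swap is secured, the inversion-counting reduction of the first paragraph completes the proof.
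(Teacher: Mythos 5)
Your outer skeleton --- reduce to interchanging two consecutive removals that form an inversion, then bubble-sort --- is exactly the paper's strategy, which likewise establishes a local swap and then ``applies this idea again and again.'' But your proposal stops precisely where the proof has to start: you explicitly defer the non-interference claim (``I expect the crux to be exactly this non-interference \dots I would control it with Proposition~\ref{propo4.1}''), and that claim \emph{is} the lemma. The paper's mechanism is not a cell-by-cell analysis of column tops but a residue computation in the affine symmetric group: writing $\omega^{(i)}=t_{r,s}(\omega^{(i-1)})$ and $\omega^{(i+1)}=t_{r',s'}(\omega^{(i)})$ via Proposition~\ref{T:scover}, one shows the cyclic residue intervals $[r,s-1]$ and $[r',s'-1]$ are disjoint and not contiguous --- disjoint because both lowest ribbons lie in the main subpartition of $\hat\gamma$, whose diagonals carry pairwise distinct residues, and non-contiguous because $r\equiv s' \bmod\, k+1$ would create a hook of length $k+1$ in the core $\omega^{(i-1)}$. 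Disjointness plus non-contiguity makes $t_{r,s}$ and $t_{r',s'}$ commute as products of generators $\sigma_j$ with non-adjacent indices, and the two covering relations transport across the commutation because the adding/deleting processes of the two transpositions cannot interact. Note that this one residue statement handles \emph{all} translates of the two ribbon families at once, including the stacked configuration you flag as needing to be ruled out: a ribbon of one family sitting directly atop a ribbon of the other would force their residue intervals to overlap or be contiguous. Your set-theoretic formulation, by contrast, leaves you with genuinely unverified obligations --- that $\tilde\omega=\omega^{(i)}\setminus R$ is a $k+1$-core at all, and that $|\kbnd(\cdot)|$ drops by exactly one at each of the two swapped steps --- and these are just as hard as the original problem if attacked diagrammatically; in the transposition formulation they come for free, since $t_{r',s'}(\omega^{(i-1)})$ is a core by construction and the cover relations follow from the commutation.

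One point in your favor: your inversion-counting reduction is sound as stated. The paper's swap excludes only the case where the first-removed lowest ribbon sits on top of the second (in which case the order is forced), and in an adjacent inversion $r_i<r_{i+1}$ that stacking cannot occur, since it would force $r_i=r_{i+1}+1$. So the verdict is: correct strategy and correct reduction, but the heart of the proof --- identifying the ribbons with the transpositions $t_{r,s}$, $t_{r',s'}$ and proving the residue intervals are disjoint and non-contiguous so that the transpositions commute --- is missing rather than merely compressed, and the tools you name (Proposition~\ref{propo4.1}, Remark~\ref{remleft}) do not by themselves supply it.
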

\begin{proof}
Suppose we have $\omega^{(i+1)} \lessdot \omega^{(i)} \lessdot \omega^{(i-1)}$, with both $\omega^{(i-1)}/\omega^{(i)}$
and $\omega^{(i)}/\omega^{(i+1)}$ given by a union of horizontal ribbons, the lowest of which are respectively $R_1$
and $R_2$. By Proposition~\ref{T:scover}, we have $\omega^{(i)}= t_{r,s} (\omega^{(i-1)})$, where $r$ (resp. $s-1$) is
the residue of the leftmost (resp. rightmost) cell in $R_1$.  Similarly, we have $\omega^{(i+1)}= t_{r',s'}
(\omega^{(i)})$, where $r'$ (resp. $s'-1$) is the residue of the leftmost (resp. rightmost) cell in $R_2$. If $R_1$
does not sit on top of $R_2$ (in which case $R_1$ would necessarily have to be removed first), we have that the cyclic
intervals $[r,s-1]$ and $[r',s'-1]$ are disjoint and not contiguous. This is because $R_1$ and $R_2$ belong to the main
subpartition of $\hat \gamma$ (which does not have repeated diagonals of the same residue) and because   $r\neq s' \mod
k+1$ (otherwise there would be a hook of length $k+1$ in the core  $\omega^{(i-1)}$). Observe that in this case,
$$
t_{r',s'} t_{r,s}(\omega^{(i-1)}) = t_{r,s} \left( t_{r',s'}(\omega^{(i-1)}) \right) \lessdot
t_{r,s}(\omega^{(i-1)}) \implies  t_{r',s'} (\omega^{(i-1)}) \lessdot
\omega^{(i-1)}
$$
and
$$
t_{r,s} (\omega^{(i-1)}) \lessdot
\omega^{(i-1)} \implies
t_{r',s'} t_{r,s}(\omega^{(i-1)}) = t_{r',s'} \left( t_{r,s}(\omega^{(i-1)}) \right) \lessdot
t_{r',s'}(\omega^{(i-1)})   \, ,
$$
since there is no interference in the adding and deleting process involved
in acting with the transpositions.
Therefore,
$$\omega^{(i+1)}=t_{r',s'} t_{r,s}(\omega^{(i-1)}) \lessdot
t_{r,s}(\omega^{(i-1)}) \lessdot \omega^{(i-1)}$$ leads to
$$\omega^{(i+1)}=t_{r',s'} t_{r,s}(\omega^{(i-1)}) \lessdot
t_{r',s'}(\omega^{(i-1)}) \lessdot \omega^{(i-1)} \, ,$$ meaning that the ribbons $R_1$ and $R_2$ (with their
translates) can be removed in any order.   The general result then follows by
applying
this idea again and again.
\end{proof}

The following results will show that removing a $(k,\ell)$-strip from
a $k+1$-core $\hat \gamma$ removes a $\ell$-vertical
strip from the $k$-bounded partition associated to $\hat \gamma$.

\begin{lemma}  Let $\gamma$ and $\delta$ be two $k+1$-cores such that $|\kbnd (\gamma)|=
 |\kbnd (\delta)|+1$ and such that $\delta \subset \gamma$.  If
 $\gamma/\delta$ is a union of horizontal ribbons, the highest of which appears in
 row $i$, then $\kbnd(\gamma)=\kbnd(\delta)+{\bf e}_i$, where ${\bf e}_i$ is
 the vector with a 1 in position $i$ and 0 everywhere else.
\end{lemma}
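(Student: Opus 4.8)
The plan is to translate the statement into a row-by-row count of $k$-bounded cells and to exploit the rigid geometry of $\gamma/\delta$ supplied by Proposition~\ref{T:scover}. Since $\delta\subset\gamma$ and $|\kbnd(\gamma)|=|\kbnd(\delta)|+1$, the characterization recalled before Proposition~\ref{T:scover} gives $\delta\lessdot\gamma$, so $\gamma=t_{r,s}\delta$ with $s-r<k+1$ and $\gamma/\delta$ is a disjoint union of translated horizontal ribbons $R_1,\dots,R_t$, each of length $L=s-r$, whose heads lie on consecutive diagonals of residue $s-1$; let $R_1$ sit in the top row $i$. Writing $\kbnd(\gamma)_a$ for the number of $k$-bounded cells in row $a$, the identity $\sum_a\kbnd(\gamma)_a=\sum_a\kbnd(\delta)_a+1$ shows it is enough to prove that $\kbnd(\gamma)_a=\kbnd(\delta)_a$ for every $a\neq i$; the single surplus cell is then forced into row $i$.

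First I would record the arithmetic of hook-lengths. For a cell $(a,b)$ set $Q(a,b)=\gamma_a+\gamma'_b-a-b$, the hook-length minus one; a cell is $k$-bounded exactly when $Q(a,b)\le k-1$, it fails to be $k$-bounded exactly when $Q(a,b)\ge k+1$, and $Q(a,b)=k$ is impossible because $\gamma$ is a $k+1$-core. Since $\delta\subseteq\gamma$ one has $Q^{\gamma}(a,b)-Q^{\delta}(a,b)=(\gamma_a-\delta_a)+(\gamma'_b-\delta'_b)\ge 0$, so hook-lengths only grow; hence the $k$-bounded cells of $\gamma$ are those of $\delta$ minus the old cells whose hook crossed the threshold, plus the new cells of $\gamma/\delta$ that are $k$-bounded. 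A short argument shows that every ribbon cell is $k$-bounded: no cell of $\gamma$ lies directly above a ribbon cell $(a,b)$ (the rows immediately above are too short, which one checks from the diagonal spacing $k+1$ of the heads), so its leg vanishes and its hook is $\gamma_a-b+1\le L\le k$. Thus each ribbon contributes exactly $L$ new $k$-bounded cells.

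Next I would dispose of the rows above $i$ and of the one-ribbon case. If $a>i$, then row $a$ carries no ribbon and every column $b\le\gamma_a$ satisfies $b\le\gamma_{i+1}=\delta_{i+1}\le\delta_{a'}$ for each ribbon row $a'\le i$, so no ribbon touches column $b$; hence $Q^{\gamma}=Q^{\delta}$ on row $a$ and $\kbnd(\gamma)_a=\kbnd(\delta)_a$. When $t=1$ the only ribbon columns are the new cells, so $\gamma'_b=\delta'_b$ for every old cell and $Q^{\gamma}=Q^{\delta}$ off row $i$; the reduction then finishes the case immediately.

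The heart of the proof, and the step I expect to be hardest, is the interaction between several ribbons. For $a\neq i$ the net change in row $a$ equals the number of $k$-bounded ribbon cells gained (which is $L$ if $a$ is a ribbon row and $0$ otherwise) minus the number of old cells pushed past the threshold, and an old cell $(a,b)$ is pushed out precisely when $Q^{\delta}(a,b)\in[\,k+1-(\gamma_a-\delta_a)-m_b,\ k-1\,]$, where $m_b=\gamma'_b-\delta'_b$ counts the ribbons lying above $(a,b)$ in its column. The task is to show that these pushed-out cells exactly balance the gain: none in a non-ribbon row, and precisely $L$ in each ribbon row other than the top one, the top row being short by one because its head is a fresh corner of hook $1$ with no ribbon above it. Controlling $m_b$ is the crux: I would use that the heads occupy consecutive diagonals of residue $s-1$ together with the extremal-cell comparisons of Proposition~\ref{propo4.1} to pin down, column by column, exactly which old cells acquire enough extra leg to cross the threshold, matching them with the freshly $k$-bounded ribbon cells in the same row. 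The bookkeeping is delicate precisely when ribbon columns overlap (which forces the rows to be more than $k+1-L$ apart), and this is where the residue constraints, rather than naive inequalities, are needed to close the argument; the reordering furnished by Lemma~\ref{lemmaorder} may be invoked to reduce to the case of two consecutive ribbons.
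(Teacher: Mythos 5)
Your proposal follows the same route as the paper's proof: count $k$-bounded cells row by row, note that every cell of $\gamma/\delta$ is $k$-bounded in $\gamma$ (each ribbon cell sits at the top of its column, so its hook is at most $L=s-r\le k$), show rows above $i$ are untouched, and let the global identity $|\kbnd(\gamma)|=|\kbnd(\delta)|+1$ force the surplus into row $i$ once every other row balances. All of that matches the paper, and your treatment of rows above $i$ is correct. But at exactly the point you label ``the heart of the proof'' you stop proving and start planning: the claim that the threshold-crossing cells number $0$ in each non-ribbon row and exactly $L$ in each ribbon row below the top one is announced as ``the task'' and never carried out. Since that claim \emph{is} the lemma, this is a genuine gap, not a detail.

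Moreover, the plan you sketch for closing it is misdirected on two counts. First, the quantity $m_b=\gamma'_b-\delta'_b$ you propose to control by residue arguments satisfies $m_b\le 1$ automatically: if a cell of $\gamma$ lay directly above a cell of $\gamma/\delta$, it would itself belong to $\gamma/\delta$ (because $\delta$ is a partition), putting two cells of one connected component in the same column and contradicting horizontality. So ``overlapping ribbon columns'' never occur, and the non-ribbon-row case is then immediate -- the hook-length of such a cell changes by $m_b\le 1$, and since neither core admits a hook of length exactly $k+1$, no cell can cross between $\le k$ and $\ge k+2$; this is precisely the paper's opening observation. Second, Lemma~\ref{lemmaorder} cannot be invoked to ``reduce to two consecutive ribbons'': it reorders distinct covers $\omega^{(i+1)}\lessdot\omega^{(i)}$ inside a vertical $(k,\ell)$-strip, whereas here there is a single cover $\delta\lessdot\gamma$, and deleting one component of $\gamma/\delta$ by itself does not in general leave a $k+1$-core, so no ribbon-by-ribbon induction through cores is available. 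What actually closes the count (and what the paper exhibits, albeit only by example) is the diagonal spacing of Proposition~\ref{T:scover}: for a ribbon row $a<i$, the next component up has its head $(a'',b'')$ on the diagonal exactly $k+1$ smaller, whence the $L$ cells of row $a$ lying in that component's columns have hooks exactly $k+2,k+3,\dots,k+L+1$ in $\gamma$; these drop by $L+1$ in $\delta$ and are precisely the cells that cross the threshold, matching the $L$ ribbon cells, while cells further left keep hooks $\ge k+2$ and cells between the two components already have hooks $\le k$. For the top ribbon row no such component exists, and the global identity then pins the difference there. Your write-up needs this computation (or an equivalent one) spelled out before it can count as a proof.
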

\begin{proof}  First note that if a row of $\gamma/\delta$ does not contain a
  horizontal ribbon then the number of $k$-bounded cells in that row is the same in $\gamma$ and
in $\delta$.  This is because the hook-length of a cell in that row is changed by at most one cell, preventing a change
of the hook length from more than $k+1$ to less than $k+1$ (recall that a $k+1$-core does not contain cells with
hook-lengths of $k+1$). As for the remaining rows, recall that the head of the ribbons occur in consecutive diagonals
of the same residues. The proof is then illustrated in the
following example at $k=3$, where the cells in bold face
are the horizontal ribbons in $\gamma/\delta$, and the cells with an $x$ are
the cells that went from not being $k$-bounded in $\gamma$ to being $k$-bounded
in $\delta$.
\begin{equation*}
{\tiny{\tableau*[scY]{ \cr & \cr & &  \cr & &  &   \cr & & & x & & \tf & \tf \cr & & & & &
      x & x & & \tf & \tf \cr & & & & & & & & x & x & & \tf &  \tf \cr}}}
\end{equation*}
One simply needs to observe that in each row that contains a ribbon,
the number of cells with an $x$ is equal to the number of cells in bold face
(except in the highest such row).
Since $|\kbnd (\gamma)|=
 |\kbnd (\delta)|+1$, this implies that it must differ by one in the highest
row that contains a ribbon.
\end{proof}

\begin{proposition} \label{vertstripprop}
 Let $\delta$ and $\hat \gamma$ be $k+1$-cores
such that $\delta$
can be obtained by removing a vertical $(k,\ell)$-strip from $\hat \gamma$.
Then $\kbnd(\hat \gamma)/\kbnd(\delta)$ is a
vertical $\ell$-strip (in the usual sense).
\end{proposition}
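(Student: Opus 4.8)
The plan is to strip the vertical $(k,\ell)$-strip off one covering step at a time and add up the effect on the $k$-bounded partitions by means of the lemma immediately preceding this proposition. Fix a defining sequence $\hat\gamma=\omega^{(1)}\supset\cdots\supset\omega^{(\ell+1)}=\delta$; by Lemma~\ref{lemmaorder} I may assume the rows $r_1>r_2>\cdots>r_\ell$ of the lowest ribbons strictly decrease. For each $i$, condition (2) of Definition~\ref{defklstrip} tells us $\omega^{(i)}/\omega^{(i+1)}$ is a union of horizontal ribbons, and I let $h_i$ denote the row of its highest ribbon. Since $\omega^{(i+1)}\subset\omega^{(i)}$ with $|\kbnd(\omega^{(i)})|=|\kbnd(\omega^{(i+1)})|+1$, the preceding lemma applies and gives $\kbnd(\omega^{(i)})=\kbnd(\omega^{(i+1)})+{\bf e}_{h_i}$. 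Telescoping over $i$ then yields
\begin{equation*}
\kbnd(\hat\gamma)=\kbnd(\delta)+\sum_{i=1}^{\ell}{\bf e}_{h_i}.
\end{equation*}

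Because the correction term is coordinatewise nonnegative, this already forces $\kbnd(\delta)\subseteq\kbnd(\hat\gamma)$, and the skew shape $\kbnd(\hat\gamma)/\kbnd(\delta)$ has exactly $\ell$ cells. Hence it is a vertical $\ell$-strip precisely when $\sum_i {\bf e}_{h_i}$ is a $0/1$ vector, that is, when the rows $h_1,\dots,h_\ell$ are pairwise distinct. (Note that the multiset $\{h_i\}$ is forced by the fixed vector $\kbnd(\hat\gamma)-\kbnd(\delta)$, so this distinctness does not depend on the chosen ordering of the covers; the ordering is only a convenience for the geometric argument below.) Everything therefore reduces to the single combinatorial claim that no two covering steps have their highest ribbon in the same row.

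This distinctness is the heart of the matter and where I expect the real difficulty. The data we are handed concern the \emph{bottoms} of the chains — the rows $r_i$ are distinct and lie in the main subpartition (Remark~\ref{remlowest}) — whereas the $\kbnd$-drops occur at the \emph{tops} $h_i$, and a single step can remove several stacked ribbons, so the passage from ``distinct bottoms'' to ``distinct tops'' is not formal. The approach I would take is geometric: by Proposition~\ref{T:scover} the ribbons of step $i$ sit on consecutive diagonals of a single residue $\rho_i$ with heads marching to the north-west, so the top ribbon, in row $h_i$, is the one whose head lies furthest north-west. Using the ordering $r_1>\cdots>r_\ell$ together with the non-interference of the transpositions established inside the proof of Lemma~\ref{lemmaorder}, I would follow each chain upward and show it terminates at its top exactly where the residue-$\rho_i$ diagonal first reaches the top of a column (or loses the cell to its right); the core constraints in Proposition~\ref{propo4.1}(2) serve to pin down this terminal location.

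The crux is then to rule out a collision, i.e. two chains $i\neq j$ whose tops land in a common row $h$. Their top heads would be two extremal cells of $\hat\gamma$ in row $h$, and I would extract a contradiction from Proposition~\ref{propo4.1}: one of them having a cell above or to its right would force the same for the other, contradicting the maximality (topness) of one of the two chains. Carrying out this case analysis carefully — in particular handling chains of unequal length, and chains sharing a residue $\rho_i=\rho_j$, whose bottoms then sit on a single common diagonal of the main subpartition — is the delicate, non-routine part; the telescoping and the reduction above are by comparison mechanical.
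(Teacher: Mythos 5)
You correctly reduce the proposition to the paper's key claim, but you do not prove that claim. The telescoping step is sound and coincides with the paper's opening move: the unlabeled lemma preceding the proposition gives $\kbnd(\omega^{(i)})=\kbnd(\omega^{(i+1)})+{\bf e}_{h_i}$ for each cover, and summing reduces everything to showing that the rows $h_1,\dots,h_\ell$ of the highest ribbons are pairwise distinct. At exactly this point your write-up stops proving and starts planning: ``I would follow each chain upward\dots'', ``the crux is then to rule out a collision\dots'', and you explicitly defer the ``delicate, non-routine part'' to an unexecuted case analysis (unequal chain lengths, chains of equal residue, etc.). That deferred claim is essentially the entire content of the paper's proof, so the proposal has a genuine gap: nothing in it actually rules out two covering steps dropping their $k$-bounded cell in the same row.

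For comparison, the paper closes this gap without any analysis of where the chains terminate and without your collision dichotomy. Ordering the removals top to bottom (Lemma~\ref{lemmaorder}), let $\tilde R$ be any translate of the step-$i$ ribbon and let $R$ be the translate lying in the main subpartition, whose rightmost cell has residue $r$. Because the lowest ribbons sit entirely inside the main subpartition (Remark~\ref{remlowest}), whose cells are all $k$-bounded (Remark~\ref{rembounded}) so that no two of its diagonals carry the same residue, no cell of residue $r$ is added or removed during the earlier (higher) steps; hence the rightmost cell of $\tilde R$ is extremal already in $\hat\gamma$. Since the rightmost cell of $R$ is at the end of its row of $\hat\gamma$ (each row of the main subpartition hosts at most one lowest ribbon, by Definition~\ref{defklstrip}), Proposition~\ref{propo4.1}(1a) forces the rightmost cell of \emph{every} translate $\tilde R$ to be at the end of its row of $\hat\gamma$ as well. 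Consequently no ribbon of the strip can lie strictly to the right of another in the same row, so all ribbons --- in particular all highest ones --- occupy pairwise distinct rows, which is stronger than what your telescoping requires. Note that the instrument is Proposition~\ref{propo4.1}(1a), the north-west propagation of ``end of row'', rather than part (2) as you proposed, and that the conclusion is global (no two ribbons of the whole strip share a row), which bypasses entirely the top-of-chain bookkeeping, the stacked-ribbon worry, and the shared-residue cases you anticipated having to handle.
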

\begin{proof}
From the previous lemma, we simply need to show that when going
  from $\hat \gamma$ to $\delta$ by removing horizontal
  ribbons, two ribbons will never occur in the same row.
From Lemma \ref{lemmaorder}, it is possible to choose $\hat
\gamma=\omega^{(1)}\supset  \omega^{(2)} \supset \cdots \supset \omega^{(\ell+1)}=\delta$ such that the horizontal ribbons
are removed from top to bottom in the main subpartition. Let
$\omega^{(i)}/\omega^{(i+1)}$ contain a given ribbon $\tilde R$ and all
its translates. The rightmost cell of $R$, the translate of $\tilde R$ in
the main subpartition, has a residue $r$
that is not contained in any ribbon above it in the main subpartition (from Remark \ref{remlowest} and
\ref{rembounded}). Therefore, when going from $\hat \gamma$ to $\omega^{(i)}$,
no cells of residue $r$ are removed or added, and
thus the rightmost cell of $\tilde R$ is also extremal in $\hat \gamma$.
By Proposition~\ref{propo4.1},
this means that the rightmost cell of $\tilde R$
has to be at the
end of its row in $\hat \gamma$ since the rightmost cell of $R$ is also at
the end it its row in $\hat \gamma$ (no two ribbons can occur in the same
row of the main subpartition by definition of $(k,\ell)$-strip).
Therefore, no horizontal ribbons can ever occur
to the right of $\tilde R$.
\end{proof}

We can now define the recursion for $k$-Schur functions that extends formula
\eqref{recSchur}.
\begin{definition}
Let $\mathcal
V^{(k,r)}$ be the $\mathbb Z$-linear
span of $k$-Schur functions whose first part is not larger than $r$.   Given a partition $\nu$
such that $s_{\nu}^{(k)} \in \mathcal V^{(k,r)}$, let $\lambda=(r,\nu_1,\nu_2,\dots)$ and $\gamma =\core(\lambda)$.
Then  the linear operator $e_{\ell,r}^{\perp}$ is defined on $\mathcal V^{(k,r)}$
to be such that
\begin{equation}
e_{\ell,r}^{\perp} \, s_{\nu}^{(k)} = \sum_{\mu}  s_{\mu}^{(k)} \, ,
\end{equation}
where the sum is over all $k$-bounded partition $\mu$ such that $\core(\mu)$ can be obtained by removing a vertical
$(k,\ell)$-strip from $\hat \gamma$.  If there is no such $\mu$, the result is
simply defined to be zero.
\end{definition}
Note that we only use the symbol $e_{\ell,r}^{\perp}$ in
analogy with $e_{\ell}^{\perp}$.  That is, to the best
of our knowledge,  $e_{\ell,r}^{\perp}$
is not the adjoint of multiplying by some
symmetric function
$e_{\ell,r}$ with
respect to any
scalar product.
We
should also point out that the operator $e_{\ell,r}^{\perp}$ does in fact depends on $r$, since $r$ appears in the
definition of the $k+1$-core $\gamma$ (and since extracting a $(k,\ell)$-strip from $\hat \gamma$ actually depends
on $\gamma$).
\begin{remark}  By Proposition~\ref{vertstripprop}, the $\mu$'s
such that $s_{\mu}^{(k)}$ occur in
in the action of $e_{\ell,r}^{\perp}$ on $s_{\nu}^{(k)}$ are
such that $\nu/\mu$ is a vertical $\ell$-strip.   These $\mu$'s
are thus a
subset of  the $\mu$'s such
that $s_{\mu}$ occur
in the action of $e_{\ell}^{\perp}$ on $s_{\nu}$.
\end{remark}

\begin{example} \label{ejemplo sacar todas strips}
Let $\nu = (4,3,2,2,1)$ and $k=6$. Then $\la = (4,4,3,2,2,1)$ and
$\gamma=\core(\la)=(6,6,3,2,2,1)$.  Hence
\begin{equation*} \label{diagrama tableau ejemplo}
{\hat \gamma}=(6,3,2,2,1)={\tiny \tableau[scY]{|,|,|,,*|,,*,*,*,*}}
\end{equation*}
where the framed cells correspond to the main subpartition. If we apply $e_{1,4}^{\perp}$, the vertical
$(k,\ell)$-strips need to be of length $\ell=1$.
The following diagrams show the vertical $(k,1)$-strips that can be
obtained, with the $k$-bounded cells marked with $\circ$
(thus corresponding to the $k$-bounded partitions).
\begin{equation*}
{\tiny{\tableau[scY]{|,*|,|,,|,,,,,*}}} \quad \supset \quad
{\tiny{\tableau[scY]{\circ|\circ|\circ,\circ|\circ,\circ,\circ|,\circ,\circ,\circ,\circ}}}
\end{equation*}

\begin{equation*}
{\tiny{\tableau[scY]{|,|,|,,*|,,,,,}}} \quad \supset \quad
{\tiny{\tableau[scY]{\circ|\circ,\circ|\circ,\circ|\circ,\circ|,,\circ,\circ,\circ,\circ}}}
\end{equation*}

\begin{equation*}
{\tiny{\tableau[scY]{|,|,|,,|,,,,*,*}}} \quad \supset \quad
{\tiny{\tableau[scY]{\circ|\circ,\circ|\circ,\circ|\circ,\circ,\circ|,\circ,\circ,\circ}}}
\end{equation*}
From here, we obtain that
\begin{equation}
e_{1,4}^{\perp} \, s_{\nu}^{(6)} = s_{(4,3,2,1,1)}^{(6)} +
s_{(4,2,2,2,1)}^{(6)} + s_{(3,3,2,2,1)}^{(6)}\, .
\end{equation}
\end{example}

Now, for $r=1,\dots,k$, let
\begin{equation}
B_r^{(k)} = \sum_{\ell \geq 0} (-1)^\ell h_{r+\ell} \, e_{\ell,r}^{\perp} \, .
\end{equation}
Note that this operator is only defined on $\mathcal V^{(k,r)}$.
The main result of this article is then the following.
\begin{theorem} \label{kBernsteinThm} Let
  $\lambda=(\lambda_1,\lambda_2,\dots)$
be a $k$-bounded partition.  Then
\begin{equation} \label{kBernsteinEq}
B_{\lambda_1}^{(k)}  \,  s_{\hat \lambda}^{(k)} = s_{\lambda}^{(k)} \, .
\end{equation}
\end{theorem}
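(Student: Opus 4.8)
The plan is to expand the left-hand side of \eqref{kBernsteinEq} using the two combinatorial rules at our disposal and then to exhibit a sign-reversing involution that collapses the resulting alternating sum to the single term $s_\lambda^{(k)}$. Write $r=\lambda_1$ and $\gamma=\core(\lambda)$, so that $\hat\gamma$ is the core obtained by deleting the bottom row of $\gamma$; since deleting that row leaves all remaining hook-lengths unchanged, $\hat\gamma$ is again a $k+1$-core with $\kbnd(\hat\gamma)=\hat\lambda$, i.e. $\hat\gamma=\core(\hat\lambda)$. Applying the definition of $e_{\ell,r}^{\perp}$ and then the $k$-Pieri rule \eqref{kpieriform} in the residue form of Remark~\ref{remarkstrip}, I would write
\begin{equation*}
B_r^{(k)}\,s_{\hat\lambda}^{(k)}=\sum_{\ell\ge 0}(-1)^\ell\,h_{r+\ell}\,e_{\ell,r}^{\perp}\,s_{\hat\lambda}^{(k)}
=\sum_{\ell\ge 0}(-1)^\ell\sum_{\mu}\sum_{\beta}s_{\beta}^{(k)},
\end{equation*}
where $\mu$ ranges over the $k$-bounded partitions with $\core(\mu)$ obtained from $\hat\gamma$ by removing a vertical $(k,\ell)$-strip and $\beta$ over those with $\core(\beta)/\core(\mu)$ a horizontal strip with exactly $r+\ell$ distinct residues. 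Each $\mu$ and each Pieri term occurs with multiplicity one, and by Proposition~\ref{vertstripprop} the first condition forces $\hat\lambda/\mu$ to be an ordinary vertical $\ell$-strip, so $\ell=|\hat\lambda|-|\mu|$ is determined by $\mu$. Collecting terms, the coefficient of a fixed $s_\beta^{(k)}$ becomes $\sum_{\mu}(-1)^{|\hat\lambda|-|\mu|}$ over the intermediate $\mu$ satisfying both conditions, and the goal reduces to proving that this signed count equals $\delta_{\beta,\lambda}$.

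Next I would isolate the surviving term. Taking $\ell=0$ and $\mu=\hat\lambda$, the skew $\gamma/\hat\gamma$ is a horizontal strip, a standard consequence of deleting the first row of a partition (the column ranges $(\gamma_{i+1},\gamma_i]$ occupied in successive rows are pairwise disjoint). Its number of distinct residues equals the number of cells of the bottom row of $\gamma$ carrying a $k$-bounded hook, which is exactly $\lambda_1=r$ by the definition of $\kbnd$; hence $\beta=\lambda$ does occur for $\ell=0,\ \mu=\hat\lambda$ and contributes $+1$. The content of the theorem is then that every other contribution cancels.

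For the cancellation I would build a sign-reversing involution on the admissible intermediate partitions $\mu$ attached to a fixed target $\beta$. The governing feature is that removing one extra ribbon (increasing $\ell$ by one, hence flipping the sign) must be compensated by admitting one extra residue into the horizontal strip $\core(\beta)/\core(\mu)$, since the Pieri size constraint reads $r+\ell$. Using Lemma~\ref{lemmaorder} I would first normalize every vertical $(k,\ell)$-strip so that its ribbons are removed from top to bottom inside the main subpartition of $\hat\gamma$, and then tag each removed ribbon by the residue of its rightmost (head) cell. I would locate a canonical diagonal---the one of smallest admissible residue that is simultaneously the head-residue of a removable lowest ribbon and a residue available to the added horizontal strip---and toggle whether that diagonal is spent on the removal (enlarging the vertical strip) or on the addition (enlarging the horizontal strip). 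By construction the toggle changes $\ell$ by $\pm1$, reverses the sign, fixes $\beta$, and by Remarks~\ref{rembounded} and~\ref{remlowest} keeps the removed part inside the main subpartition with its lowest ribbons in distinct rows, so the output is again admissible. The only fixed point is the configuration admitting no such diagonal, which by the extremal-cell dichotomy of Proposition~\ref{propo4.1} forces the vertical strip to be empty and $\core(\beta)=\gamma$; this is exactly the surviving term above, so for $\beta\neq\lambda$ everything pairs off and the signed count vanishes.

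The main obstacle I anticipate is verifying that this toggle is genuinely well defined and compatible with both combinatorial structures at once. On the removal side one must guarantee, via Propositions~\ref{T:scover} and~\ref{propo4.1}(3), that promoting or demoting the chosen diagonal still yields a legal chain of $\lessdot$-covers whose lowest ribbons occupy distinct rows of the main subpartition in the sense of Definition~\ref{defklstrip}; on the addition side one must check that the matching residue can legally enter or leave the residue set of the horizontal strip without creating a repeated column or violating the ``no addable-and-removable corner of the same residue'' property. Matching the head-residue of the toggled ribbon with the residue toggled in the Pieri set $A$, so that the two effects cancel cell-for-cell and residue-for-residue, is the delicate bookkeeping at the heart of the argument, and it is precisely here that Propositions~\ref{propo4.1}--\ref{T:scover} together with Remarks~\ref{rembounded}--\ref{remlowest} must all be brought to bear.
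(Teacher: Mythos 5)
Your overall strategy is the paper's: expand $B_{\lambda_1}^{(k)}\,s_{\hat\lambda}^{(k)}$ into a signed sum indexed by pairs (vertical-strip removal, Pieri residue set), identify the single surviving term coming from $\ell=0$, $\mu=\hat\lambda$ with $\sigma_B(\hat\gamma)=\gamma$, and cancel everything else by a sign-reversing involution that trades one ribbon of the vertical strip against the head residue of that ribbon in the Pieri set. That skeleton, and the observation that only the head residue is toggled while the remaining residues $r',\dots,r-1$ of the ribbon must stay in $A$, are correct and match the paper. But the proof is incomplete exactly at the point you flag as the anticipated obstacle: the involution is never actually constructed, and the rule you propose would fail as stated. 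Selecting ``the diagonal of smallest admissible residue'' is not canonical, since residues live in the cyclically ordered set $\mathbb Z_{k+1}$; more seriously, nothing guarantees your chosen diagonal is preserved by the toggle. After adding or removing the ribbon, the set of ``admissible'' diagonals changes, so applying your rule twice may select a different diagonal, destroying involutivity. The paper resolves this by anchoring the toggle at a positional invariant, the rightmost \emph{changeable cell} of the $OX$ diagram of $(\delta,A)$ (a cell of $\sigma_A(\delta)$ at the top of its column lying in $\hat\gamma$); Lemma~\ref{PositionOfChangeableCell} shows that no $O$ cells lie to its right and that it sits in the main subpartition, and the final paragraph of the proof of Proposition~\ref{Propofinal} checks that this cell is the same before and after the toggle --- which is precisely what makes $\varphi$ an involution.

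Second, your fixed-point classification is asserted rather than proved. That ``no toggleable diagonal'' forces $\ell=0$ and $\core(\beta)=\gamma$ is the content of the paper's Lemma~\ref{UniquePair}, which needs real work: uniqueness of the set $B$ with $\sigma_B(\hat\gamma)=\gamma$ comes from the dominance triangularity of the $h_\ell$-action, and ruling out any other configuration without a changeable cell uses a Bruhat-order degree argument (if $\sigma_A(\delta)\subsetneq\gamma$ then $|\kbnd(\sigma_A(\delta))|<|\lambda|$, a contradiction). On the legality side you still owe: (i) the residue bookkeeping making $\sigma_{A'}(\delta')=\sigma_A(\delta)$ exact, which requires $r'-1\notin A$ in both directions (Lemmas~\ref{LemmaCase1r'-1} and~\ref{LemmaCase2r'-1}) and $r\notin A$ in the addition direction (Lemma~\ref{LemmaCase2r}), so that $\sigma_A$ factors against $t_{r',r+1}=\sigma_{r'}\cdots\sigma_{r-1}\sigma_r\sigma_{r-1}\cdots\sigma_{r'}$; and (ii) in the direction that enlarges the vertical strip, a proof that $t_{r',r+1}(\delta)\lessdot\delta$ at all --- the paper derives this from a two-sided estimate on $|\kbnd(t_{r',r+1}(\delta))|$ --- together with Lemma~\ref{lemmaorder} and Proposition~\ref{T:scover} to see that the enlarged chain is still a legal vertical $(k,\ell+1)$-strip. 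As it stands, your proposal is a correct plan whose central object, the involution, is underdetermined; the missing verifications are exactly those the paper devotes its final section to.
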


\begin{example} \label{ejemplo expancion en bernstein}
Using $k$ and $\la$ as in Example~\ref{ejemplo sacar todas strips}, we will show that:
\begin{equation}
B_4^{(6)}s_{\hat \la}^{(6)} = s_{\la}^{(6)}.
\end{equation}
By definition, our equation amounts to:
\begin{equation} \label{equationexpanded}
B_4^{(6)} s_{\hat \la}^{(6)} = \sum_{\ell \geq 0} (-1)^\ell h_{4+\ell} \, e_{\ell,4}^{\perp} s_{\hat \la}^{(6)}.
\end{equation}
According to the diagram of equation \eqref{diagrama tableau ejemplo},
we only need to consider vertical $(k,\ell)$-strips up to
$\ell =2$.  For the first term in the sum,
acting with $e_{0,4}^{\perp}$ on $s_{\hat \la}^{(6)}$ gives $s_{\hat \la}^{(6)}$,
since we have to extract a vertical $(k,0)$-strip (which amounts to doing
nothing).
The action of $e_{1,4}^{\perp}$ was
explained in example \ref{ejemplo sacar todas
strips}. To compute the action of $e_{2,4}^{\perp}$ on $s_{\hat \la}^{(6)}$,
we present here the diagrams of the cores
that can be obtained by removing a vertical $(k,2)$-strips from $\hat \la$:
\begin{equation*}
{\tiny{\tableau[scY]{|,|,|,,*|,,,,,}}} \quad \supset \quad {\tiny{\tableau[scY]{|,*|,|,|,,,,,*}}} \quad \supset \quad
{\tiny{\tableau[scY]{\circ|\circ|\circ,\circ|\circ,\circ|,\circ,\circ,\circ,\circ}}}
\end{equation*}

\begin{equation*}
{\tiny{\tableau[scY]{|,|,|,,*|,,,,,}}} \quad \supset \quad {\tiny{\tableau[scY]{|,|,|,|,,,,*,*}}} \quad \supset \quad
{\tiny{\tableau[scY]{\circ|\circ,\circ|\circ,\circ|\circ,\circ|,\circ,\circ,\circ}}}
\end{equation*}
Therefore, \eqref{equationexpanded} gives:
\begin{equation}
B_4^{(6)} s_{\hat \la}^{(6)} = h_4 (s_{\hat \la}^{(6)}) - h_5 (s_{(4,3,2,1,1)}^{(6)} + s_{(4,2,2,2,1)}^{(6)} +
s_{(3,3,2,2,1)}^{(6)}) + h_6 (s_{(4,2,2,1,1)}^{(6)} + s_{(3,2,2,2,1)}^{(6)})
\end{equation}
Now, by applying the $k$-Pieri rule we get:
\begin{equation*}
\begin{array}{rl}
B_4^{(6)} s_{\hat \la}^{(6)} &= s_{4,4,3,2,2,1}^{(6)} + s_{5,4,3,2,1,1}^{(6)} + s_{6,3,3,2,1,1}^{(6)} + s_{5,3,3,2,2,1}^{(6)}
 + s_{6,3,3,2,2}^{(6)} + s_{5,4,2,2,2,1}^{(6)} \\ &- s_{6,3,3,2,1,1}^{(6)} - s_{5,4,3,2,1,1}^{(6)}
 - s_{6,3,2,2,2,1}^{(6)} - s_{5,4,2,2,2,1}^{(6)} - s_{6,4,2,2,1,1}^{(6)}
 - s_{6,3,3,2,2}^{(6)} - s_{5,3,3,2,2,1}^{(6)} \\
 &+ s_{6,4,2,2,1,1}^{(6)} + s_{6,3,2,2,2,1}^{(6)}.
\end{array}
\end{equation*}
Finally, canceling the expression gives $B_4^{(6)} s_{\hat \la}^{(6)}
= s_{4,4,3,2,2,1}^{(6)}=s_{\la}^{(6)} $.
\end{example}

The proof of the theorem will be of a combinatorial nature, and
will ultimately rely on the construction of a sign-reversing involution.
But first, we introduce some notation.
\begin{definition}  Given $\gamma= \core(\lambda)$, let ${\mathcal D}_{\lambda}^{(k)}$
be the set of pairs $(\delta,A)$ such that for some $\ell=0,\dots,k-\lambda_1$:
\begin{enumerate}
\item $\hat \gamma/\delta$ is a removable vertical $(k,\ell)$-strip
\item $A$ is a subset of $\mathbb Z_{k+1}$ of cardinality $\lambda_1+\ell$
such that $\sigma_{A}(\delta)$ satisfies
$|\kbnd(\sigma_A(\delta))|=|\lambda|$ (that is, $\sigma_{A}(\delta)$ is a
$k+1$-core whose number of $k$-bounded cells is $|\lambda|$).
\end{enumerate}
\end{definition}
A pair $(\delta,A) \in \mathcal D_{\lambda}^{(k)}$ can be thought of as the Ferrer's diagram of $\hat \gamma$ with the
cells of $\hat \gamma/\delta$ marked with an $O$ combined with the cells of $\sigma_A(\delta)/\delta$ marked with an
$X$. We will refer to such a diagram as the $OX$ diagram associated to the pair $(\delta,A)$. Observe by
Remark~\ref{remarkstrip} that an $OX$ diagram can never have two $X$'s in the same column since
$\sigma_A(\delta)/\delta$ is a horizontal strip. Cells that contain a $O$ and an $X$ will be called $OX$ cells and
represented by ${ \tableau[scY]{ \bl \overlay X \overlay O \overlay }}$ in diagrams.

\begin{example} \label{ejemplo par de OX diagram}
In example \ref{ejemplo expancion en bernstein}, when we expand the products, we see that the $k$-Schur function
$s_{(6,3,2,2,1)}^{(6)}$ appears two times, first in the product $h_5
s_{(4,2,2,2,1)}^{(6)}$ and
then in the product
$h_6 s_{(3,2,2,2,1)}^{(6)}$.  If we consider the $(\delta,A)$ pairs
and their corresponding $OX$ diagrams associated to
these two occurrences of $s_{(6,3,2,2,1)}^{(6)}$,
we see that the first one is:
\begin{equation*}
((6,2,2,2,1),\{ 4,6,0,1,2 \})\quad  \longleftrightarrow \quad {\tiny \tableau[scY]{~X|,~X|,|,|,, \overlay X \overlay O \overlay ,~X
| , ,,,,,~X,~X,~X,~X }}
\end{equation*}
while the second one is:
\begin{equation*}
 ((4,2,2,2,1),\{ 4,5,6,0,1,2 \}) \quad \longleftrightarrow \quad {\tiny \tableau[scY]{~X|,~X|,|,|,, \overlay X \overlay O \overlay ,~X | ,
,,,\overlay X \overlay O \overlay,\overlay X \overlay O \overlay,~X,~X,~X,~X }}
\end{equation*}
\end{example}

\begin{definition}
Let $(\delta,A) \in \mathcal D_{\lambda}^{(k)}$, and let $\gamma= \core(\lambda)$. A \underline{changeable cell} of the
pair $(\delta,A)$ is a cell of the core $\sigma_A(\delta)$ that $i)$ lies at the top of its column and $ii)$ belongs to
$\hat \gamma$.  The set of pairs $(\delta,A) \in \mathcal D_{\lambda}^{(k)}$ such that $(\delta,A)$ has a changeable
cell will be denoted ${\mathcal C}_{\lambda}^{(k)}$.
\end{definition}

\begin{remark}  In the language of the $OX$ diagram associated to
$(\delta,A)$, a changeable cell is a cell of $\sigma_A(\delta)$ at the top of its
column that is either empty or filled with an $OX$.
\end{remark}

\begin{example}
In example \ref{ejemplo par de OX diagram}, the changeable cells of the two
$OX$ diagrams
are located in
the same positions: in the fifth and sixth cells of the first row and in
the third cell of the second row.
\end{example}

\noindent{\it Proof of Theorem~\ref{kBernsteinThm}.}
Equation \eqref{kBernsteinEq} can be rewritten as
\begin{equation}
\sum_{\ell\geq 0} (-1)^\ell h_{\lambda_1+\ell} \, e_{\ell,\lambda_1}^{\perp} \, s^{(k)}_{\hat \lambda} =
s^{(k)}_{\lambda} \, .
\end{equation}
Using the action of $h_{\lambda_1+\ell}$ and $e_{\ell,\lambda_1}^{\perp}$ on
$k$-Schur functions, this is equivalent to
\begin{equation} \label{eq45}
\sum_{(\delta,A) \in \mathcal D_{\lambda}^{(k)}} (-1)^{|A|-\lambda_1} s_{\kbnd(\sigma_A(\delta))}^{(k)} =
s_{\lambda}^{(k)} \, .
\end{equation}
Let $\gamma= \core(\lambda)$.
We will show in Lemma~\ref{UniquePair} that the pair $(\hat \gamma,B)$,
where $B$ is the subset of $\mathbb Z_{k+1}$ of size $\lambda_1$ such that
$\sigma_B(\hat \gamma)=\gamma$, is the unique pair of
${\mathcal D}_{\lambda}^{(k)}$ that does not have a changeable cell. Given
that the pair $(\hat \gamma,B)$ corresponds to a term $+s_{\lambda}^{(k)}$
in the l.h.s. of \eqref{eq45}, to prove Theorem \ref{kBernsteinThm}
is suffices to show that
\begin{equation}
\sum_{(\delta,A) \in \, \mathcal C_{\lambda}^{(k)}} (-1)^{|A|} s_{\kbnd(\sigma_A(\delta))}^{(k)} = 0 \, ,
\end{equation}
where we recall that $\mathcal C_{\lambda}^{(k)}$ is the set of $(\delta,A) \in \mathcal D_{\lambda}^{(k)}$  that have
 a changeable cell.

 This result will readily follow if there exists an involution $\varphi:
 \mathcal C_{\lambda}^{(k)}\to \mathcal C_{\lambda}^{(k)}$ that
maps the pair $(\delta,A)$ to a pair
$(\delta',A')$ such that $\sigma_A(\delta)=\sigma_{A'}(\delta')$ and
$(-1)^{|A|} = - (-1)^{|A'|}$.
Such a sign-reversing involution will be constructed in the next section
(see Definition~\ref{definitioninvo} and Proposition~\ref{Propofinal}).
\hfill $\square$

By applying Theorem~\ref{kBernsteinThm} again and again, we obtain
a combinatorial interpretation for the expansion coefficients of $k$-Schur
functions in terms of homogeneous symmetric functions.
\begin{corollary}   Let $\lambda$ be a $k$-bounded partition such that $\core(\lambda)=\gamma$.
Suppose that the sequence $S= (\gamma^{(0)},\dots,\gamma^{(r)})$, with $\emptyset= \gamma^{(0)} \subsetneq \gamma^{(1)}
  \subsetneq \cdots \subsetneq \gamma^{(r)}=\gamma$, is such that
for all $i=1,\dots,r$, we have that
 $\gamma^{(i-1)}$ can be obtained by removing a $(k,\ell_i)$-vertical strip from
$\hat \gamma^{(i)}$ for some $\ell_i \in \{0,\dots,k\}$.
Define ${\rm part}(S)$ to be the
partition corresponding to the rearrangement of the sequence
$(\ell_1+p_1,\dots,\ell_r+p_r)$,
where $p_i$ is the length of the first row  of
$\kbnd(\gamma^{(i)})$ (equivalently, $p_i$ is the number of $k$-bounded
cells in the first row of $\gamma^{(i)}$).  Finally, define ${\rm sgn}(S)$ to
be $(-1)^{\ell_1+ \cdots + \ell_r}$.  Then
\begin{equation}
s_{\lambda}^{(k)}  =\sum_{S}  {\rm sgn}(S) \, h_{{\rm part}(S)} \, ,
\end{equation}
where the sum is over all possible sequences $S$ of the form given
above.
\end{corollary}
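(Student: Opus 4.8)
The corollary is a straightforward consequence of iterating the main recursion in Theorem~\ref{kBernsteinThm}. The plan is to unfold the operators $B_r^{(k)}$ applied successively and to read off the terms combinatorially as chains of cores. First I would start from the identity
\begin{equation*}
s_{\lambda}^{(k)} = B_{\lambda_1}^{(k)} s_{\hat\lambda}^{(k)} = B_{\lambda_1}^{(k)} B_{\lambda_2}^{(k)} \cdots B_{\lambda_r}^{(k)} \cdot 1 \, ,
\end{equation*}
which follows by applying the theorem $r$ times (where $r=\ell(\lambda)$), exactly as the classical formula \eqref{formulaBclass} is obtained from \eqref{recSchur}. Expanding each $B_r^{(k)} = \sum_{\ell\geq 0}(-1)^\ell h_{r+\ell}\, e_{\ell,r}^{\perp}$, the action of each factor is dictated by the definition of $e_{\ell,r}^{\perp}$ (removing a vertical $(k,\ell)$-strip) followed by multiplication by $h_{r+\ell}$ (the $k$-Pieri rule). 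The key observation is that at each stage the relevant operator is applied to a $k$-Schur function $s_{\kbnd(\gamma^{(i)})}^{(k)}$, so the whole expansion is indexed by chains of cores.

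The main bookkeeping step is to track the degrees of the $h$'s and the signs. At the $i$-th application (reading from the innermost factor outward), we act on $s_{\kbnd(\gamma^{(i)})}^{(k)}$ with $B_{p_i}^{(k)}$, where $p_i$ is the first part of $\kbnd(\gamma^{(i)})$, i.e. the number of $k$-bounded cells in the first row of $\gamma^{(i)}$. The term corresponding to removing a vertical $(k,\ell_i)$-strip from $\hat\gamma^{(i)}$ and then multiplying by $h_{p_i+\ell_i}$ contributes a factor $(-1)^{\ell_i} h_{p_i+\ell_i}$, and the resulting core is precisely $\gamma^{(i-1)}$. Here I would invoke Proposition~\ref{vertstripprop} to ensure that the intermediate partition obtained after $e_{\ell_i,p_i}^{\perp}$ is the $k$-bounded partition $\kbnd(\delta)$ with $\kbnd(\gamma^{(i)})/\kbnd(\delta)$ a vertical $\ell_i$-strip, and the $k$-Pieri rule to identify the surviving cores after multiplication by $h_{p_i+\ell_i}$. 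Collecting all factors over $i=1,\dots,r$ yields a monomial $\prod_i h_{p_i+\ell_i}$ with sign $(-1)^{\ell_1+\cdots+\ell_r}$, which matches ${\rm sgn}(S)\, h_{{\rm part}(S)}$ once the factors are rearranged into a partition.

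The remaining point is that the resulting expansion is over exactly the sequences $S$ described in the statement. Each monomial in the full expansion of $B_{\lambda_1}^{(k)}\cdots B_{\lambda_r}^{(k)}\cdot 1$ selects, at each factor, one vertical $(k,\ell_i)$-strip removal, and the data of which strip is removed at each step is tautologically the same as choosing the chain $\emptyset=\gamma^{(0)}\subsetneq\cdots\subsetneq\gamma^{(r)}=\gamma$ satisfying the stated condition. Thus the sum over monomials coincides with the sum over sequences $S$, giving the claimed formula. The part I expect to require the most care is the alignment of indices: verifying that the exponent of each $h$ is genuinely $p_i+\ell_i$ (and not, say, $\lambda_i+\ell_i$) requires noting that when $B_{p_i}^{(k)}$ is applied the subscript $p_i$ is read off as the first part of the current $k$-bounded partition $\kbnd(\gamma^{(i)})$, rather than a fixed part of the original $\lambda$; the rest is routine unwinding of the definitions and an induction on $r$.
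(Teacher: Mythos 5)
Your proposal is correct and takes essentially the same route as the paper, which obtains the corollary precisely by iterating Theorem~\ref{kBernsteinThm} (expanding $B_{p_i}^{(k)}$ at each stage and reading the resulting monomials as chains of cores), with no further argument given. Your closing caveat is indeed the one point that matters: the subscript of each $B^{(k)}$ must be read off the current partition $\kbnd(\gamma^{(i)})$ rather than the fixed parts of $\lambda$, so the opening product $B_{\lambda_1}^{(k)}\cdots B_{\lambda_r}^{(k)}\cdot 1$ should be understood only as shorthand for the adaptive, branch-by-branch recursion (applied by induction to each $s_{\mu}^{(k)}$ produced), and the $h$'s are never expanded by the $k$-Pieri rule in this derivation --- the monomials $h_{{\rm part}(S)}$ are simply carried along.
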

\begin{example}  The sequences $S$ in the previous corollary can be
interpreted as certain fillings of $\gamma=\core(\lambda)$, as we will illustrate with an example. If $k=4$ and
$\lambda=(2,2,2,1)$, the possible sequences $S$ are seen to be in correspondence with the following fillings of
$\gamma=(3,2,2,1)$:
\begin{equation*}
\begin{array}{ccccccccccccc}
{\tiny \tableau[scY]{1|2,2|3,3|4,4,4}} & & {\tiny \tableau[scY]{*1|1,1|2,2|3,3,3}} & & {\tiny
\tableau[scY]{*2|1,1|2,2|3,3,3}} & & {\tiny \tableau[scY]{1|2,3*|3,3|4,4,4}} & & {\tiny
\tableau[scY]{1*|1,2*|2,2|3,3,3}} & & {\tiny \tableau[scY]{2*|1,2*|2,2|3,3,3}} & & {\tiny \tableau[scY]{1|2,4*|3,3|4,4,4}}
\\
\\{\tiny \tableau[scY]{1*|1,3*|2,2|3,3,3}} & & {\tiny \tableau[scY]{2*|1,3*|2,2|3,3,3}}
& & {\tiny \tableau[scY]{1*|1*,2*|1,1|2,2,2}} & & {\tiny \tableau[scY]{1|2,4*|3,4*|4,4,4}} & & {\tiny
\tableau[scY]{1*|1,*3|2,*3|3,3,3}} & & {\tiny \tableau[scY]{2*|1,3*|2,*3|3,3,3}} & & {\tiny
\tableau[scY]{1*|1*,2*|1,2*|2,2,2}}
\end{array}
\end{equation*}
In these diagrams, the partition $\gamma^{(i)}$ for a sequence $S= (\gamma^{(0)},\dots,\gamma^{(r)})$ can be obtained
by reading the subdiagram containing the letters up to $i$ in the diagram.  The framed cells containing letter $i$
indicate the location of the $(k,\ell_i)$-vertical strip extracted from $\hat \gamma^{(i)}$ to obtain $\gamma^{(i-1)}$.
For instance, the fifth diagram of the second row, corresponds to the sequence $S'=(\emptyset , (1,1), (1,1,1),
(3,2,2,1))$. We illustrate this with the following figure:
\begin{equation*}
\emptyset \quad \stackrel{1}{\longleftarrow} \quad {\tiny \tableau[scY]{1*|\bf 1}} \quad \stackrel{0}{\longleftarrow}
\quad {\tiny \tableau[scY]{1*|1|\bf 2}} \quad \stackrel{2}{\longleftarrow} \quad {\tiny
\tableau[scY]{1*|1,3*|2,3*|3,\bf 3,\bf 3}}
\end{equation*}
Each diagram corresponds to a $k+1$-core in the sequence, and the number above each arrow indicates the size of the
$(k,\ell)$-vertical strip that is extracted to obtain the
$k+1$-core that follows in the sequence. The bold face numbers in each diagram
correspond to the $k$-bounded cells in each first row. We then
see that $\rm{part}(S')=(4,2,1)$, coming from the
composition $(1+1,1+0,2+2)$, and $sgn(S')=(-1)^{1+0+2}=-1$.

We thus have
\begin{eqnarray*}
s_{(2,2,2,1)}^{(4)}& = & h_2h_2h_2h_1 - h_3h_2h_2 - h_3h_2h_2 - h_3h_2h_1h_1 + h_3h_2h_2 +  h_4h_2h_1 - h_3h_2h_1h_1  \\
                   &   & +h_3h_2h_2 + h_3h_3h_1 - h_4h_3 + h_4h_1h_1h_1 - h_4h_2h_1 - h_4h_2h_1 + h_4h_3 \\
                   & = & h_2h_2h_2h_1 - 2 h_3h_2h_1h_1 +h_3 h_3 h_1+ h_4h_1h_1h_1 - h_4h_2h_1
\end{eqnarray*}

\end{example}

\section{The involution}

\begin{lemma} \label{UniquePair}  The only pair in $\mathcal D_{\lambda}^{(k)}$
whose corresponding
  diagram does not have a changeable cell is $(\hat \gamma,B)$, where $B$ is
  the unique subset of $\mathbb Z_{k+1}$ of size $\lambda_1$ such that $\sigma_B(\hat \gamma)=\gamma$.
\end{lemma}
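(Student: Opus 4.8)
The plan is to establish two things about the pair $(\hat\gamma, B)$: first, that it genuinely lies in $\mathcal D_\lambda^{(k)}$ and has no changeable cell, and second, that it is the \emph{only} such pair. Throughout, $\gamma = \core(\lambda)$, and recall that $\sigma_B(\hat\gamma) = \gamma$ where $B$ encodes the $k$-Pieri addition of the first row $\lambda_1$ to $\hat\lambda$; here the $(k,\ell)$-strip removed is empty (i.e. $\ell = 0$, $\delta = \hat\gamma$), so $|A| - \lambda_1 = 0$ and the sign is $+1$, matching the target term $+s_\lambda^{(k)}$ on the right of \eqref{eq45}.

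\textbf{First I would show $(\hat\gamma, B)$ has no changeable cell.} A changeable cell is a cell of $\sigma_B(\hat\gamma) = \gamma$ that lies at the top of its column \emph{and} belongs to $\hat\gamma$. With $\delta = \hat\gamma$, the $OX$ diagram has no $O$'s (the strip is empty), so a changeable cell must be an empty cell of $\gamma$ at the top of its column that lies in $\hat\gamma$ — but every cell of $\hat\gamma$ is filled (none is empty in the sense of being in $\gamma \setminus \hat\gamma$). More precisely, the changeable cells are cells of $\gamma$ at the top of their column belonging to $\hat\gamma$; such a cell would have to be the topmost cell in a column of $\gamma$ while still sitting inside $\hat\gamma = (\gamma_2, \gamma_3, \dots)$, which is impossible because passing from $\hat\gamma$ to $\gamma$ reinstates the entire first row, so every cell of $\hat\gamma$ has a cell of $\gamma$ strictly above it (in the reinserted first row, or in a higher row). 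Hence no cell of $\hat\gamma$ is at the top of its column in $\gamma$, and $(\hat\gamma, B)$ has no changeable cell.

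\textbf{Then I would prove uniqueness.} Suppose $(\delta, A) \in \mathcal D_\lambda^{(k)}$ has no changeable cell; I want to force $\delta = \hat\gamma$ and $A = B$. The key idea is that the absence of a changeable cell constrains where the $X$'s of $\sigma_A(\delta)/\delta$ and the $O$'s of $\hat\gamma/\delta$ can sit: every cell of $\hat\gamma$ at the top of its column in $\sigma_A(\delta)$ must be \emph{neither} empty nor an $OX$ cell, i.e. it must be an honest (unmarked, non-$O$) cell of $\delta$ already present before any addition. I would argue that if the removed $(k,\ell)$-strip were nonempty ($\ell \geq 1$), then the topmost $O$ (the head of the highest removed ribbon) would produce a cell at the top of its column in $\sigma_A(\delta)$ lying in $\hat\gamma$ — a changeable cell — contradicting the hypothesis; this uses Proposition~\ref{vertstripprop} and the ribbon structure from Proposition~\ref{T:scover} to locate that head. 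Thus $\ell = 0$ and $\delta = \hat\gamma$. Once $\delta = \hat\gamma$, the condition $|\kbnd(\sigma_A(\hat\gamma))| = |\lambda|$ with $|A| = \lambda_1$ pins down $A$ via the $k$-Pieri rule, and Proposition~\ref{propo4.1}(3) (a core never has a removable and an addable corner of the same residue) guarantees that the subset $A$ realizing $\sigma_A(\hat\gamma) = \gamma$ is unique, giving $A = B$.

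\textbf{The main obstacle} will be the uniqueness argument, specifically showing that any nonempty removed strip forces a changeable cell. The subtlety is that an $O$-cell is only changeable if it is simultaneously at the top of its column in the \emph{added} core $\sigma_A(\delta)$; since the $X$'s can be stacked above the $O$'s, I must verify that the highest $O$ cannot be covered by an $X$ (or that if it is, the resulting $OX$ cell is itself at the top of its column and hence still changeable — which is exactly why $OX$ cells count as changeable). I expect to use Remark~\ref{remlowest} and Remark~\ref{rembounded} to confine the relevant ribbon heads to the main subpartition, and Proposition~\ref{propo4.1}(1) to propagate the top-of-column property of that head leftward/upward along cells of equal residue, thereby pinning the changeable cell precisely.
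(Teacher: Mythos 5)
Your first half (that $(\hat\gamma,B)$ itself has no changeable cell because $\gamma/\hat\gamma$ places a cell on top of every column of $\hat\gamma$) is correct and is exactly the paper's opening observation. But the uniqueness direction has two genuine gaps. First, in the case $\delta=\hat\gamma$ you assert that ``the condition $|\kbnd(\sigma_A(\hat\gamma))|=|\lambda|$ with $|A|=\lambda_1$ pins down $A$ via the $k$-Pieri rule.'' It does not: by \eqref{kpieriform} there are typically many subsets $A$ of size $\lambda_1$ satisfying that condition --- one for each term of $h_{\lambda_1}s_{\hat\lambda}^{(k)}$ (five of them in the paper's example for $h_4\,s^{(6)}_{(4,3,2,2,2,1)}$). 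Your appeal to Proposition~\ref{propo4.1}(3) only addresses uniqueness of the subset realizing $\sigma_A(\hat\gamma)=\gamma$, but you are not given $\sigma_A(\hat\gamma)=\gamma$; you are given only ``no changeable cell,'' and the whole content of the lemma in this case is to show that every Pieri subset $A\neq B$ \emph{produces} a changeable cell. The paper's argument here is a rigidity statement you are missing: if $\sigma_A$ placed an $X$ atop every column of $\hat\gamma$, then since both $\sigma_A(\hat\gamma)/\hat\gamma$ and $\sigma_B(\hat\gamma)/\hat\gamma$ are horizontal strips, the two cores would agree above the first row, and equality of $k$-bounded cell counts would force $\sigma_A(\hat\gamma)=\sigma_B(\hat\gamma)$, hence $A=B$; so $A\neq B$ leaves some column of $\hat\gamma$ without an $X$ on top, and the top cell of that column is changeable.

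Second, in the case $\delta\subsetneq\hat\gamma$ your key local claim --- that the head of the highest removed ribbon yields a changeable cell --- fails in exactly the configuration your ``main obstacle'' paragraph does not anticipate: when the $O$'s fill \emph{entire} columns of $\hat\gamma$. (This can happen, e.g., by removing the full rightmost column of $\hat\gamma$ through successive horizontal ribbons.) Such a column contains no cell of $\sigma_A(\delta)$ at all, hence has no top-of-column cell and contributes no changeable cell; your plan to propagate a top-of-column property via Proposition~\ref{propo4.1}(1) has nothing to latch onto there. Indeed one checks (as in the paper) that any column containing an $O$ together with at least one cell of $\sigma_A(\delta)$ always yields a changeable cell (an unmarked $\delta$-cell or an $OX$ cell at the top, since a column carries at most one $X$), so the only escape is the all-$O$-column configuration, with $X$'s atop every surviving column of $\hat\gamma$. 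Ruling this out requires a \emph{global counting} argument, not a local one: every $X$ sitting directly above a column of $\hat\gamma$ has residue in $B$ (because $\gamma/\hat\gamma=\sigma_B(\hat\gamma)/\hat\gamma$ covers all those positions), so at most $|B|=\lambda_1$ distinct residues are available, while $\sigma_A$ must contribute $|A|=\lambda_1+\ell>\lambda_1$ distinct residues, each adding at least one $X$ --- a contradiction. Without this counting step (and the accompanying observation that no $X$ can appear to the right of an all-$O$ column, by the partition property of $\sigma_A(\delta)$), your proof of uniqueness does not go through.
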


\begin{proof}  It was proven in \cite{[LMproofs]} that the $k$-Schur functions obey
\begin{equation}
h_\ell s_{\nu}^{(k)} = s_{(\ell,\nu)}^{(k)} + \sum_{\mu} s_{\mu}^{(k)}\,
\end{equation}
where the sum is over some $\mu$'s that are larger than $(\ell,\nu)$ in dominance order. That there exists a unique
subset $B$ of $\mathbb Z_{k+1}$ of size
  $\lambda_1$ such that $\sigma_B(\hat \gamma)=\gamma$ follows from that equation
when translated in the language of cores (see the $k$-Pieri rule \eqref{kpieriform}).
By the construction of the core associated to a $k$-bounded partition (see
Example~\ref{exskew}),  $\sigma_B(\hat \gamma) =\gamma$ corresponds to $\hat
\gamma$ with cells added in all the columns of $\hat \gamma$ plus possibly
some extra columns.  Therefore, the $OX$ diagram of $(\hat \gamma,B)$
has an $X$ at the top of every column of $\sigma_B(\hat \gamma)=\gamma$.
Since there are no $O$ cells (no cells were removed), there are no $OX$ cells
and thus there are no changeable cells.

The main observation in the previous paragraph is that acting with $\sigma_B$ adds
a cell on top of every column of $\hat \gamma$. If $|A|=|B|$ and
$A$ adds a cell on top of every column of $\hat \gamma$, then it is easy to
see that we must have $A=B$.  This is because in this case, starting from the
second row, the cores
$\sigma_A(\hat \gamma)$ and $\sigma_B (\hat \gamma)$ are equal since
$\sigma_A(\hat \gamma)/\hat \gamma$ and $\sigma_B(\hat \gamma)/\hat \gamma$ are horizontal
strips.
To have the same number of $k$-bounded cells,
$\sigma_A(\hat \gamma)$ and $\sigma_B(\hat \gamma)$ must thus be equal, which gives that
$A=B$.   Therefore, if $A\neq B$ and $|A|=|B|$,
when acting with $\sigma_A$ on
${\hat \gamma}$, some columns of
 $\hat \gamma$ will not contain an $X$, and thus some changeable
 cells will be present.

If $\delta
\neq \hat \gamma$, then the $OX$ diagram of $(\delta,A)$ will contain
some $O$ cells.   The only possible case without changeable cells is the case
where, weakly to the right of a certain column $c$, the columns are entirely filled with
$O$ cells (since a cell below an $O$ cell is changeable, as is an $OX$ cell),
and where $X$ cells appear on top of every column of $\hat \gamma$
to the left of column $c$.
\begin{center}
\includegraphics{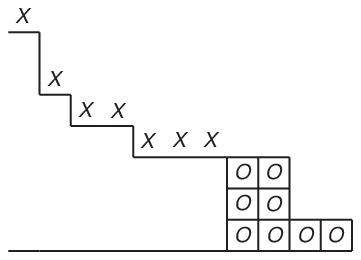}
\end{center}
But this is impossible: from the previous paragraph there are at most $|B|$
distinct residues in the columns above $\hat \gamma$, and we have to add
$|A|>|B|$ residues in a subset of those columns.
\end{proof}

\begin{remark} \label{PositionOfBResidues}
The elements of $B$ are the residues of the $X$'s that sit on top of the main
subpartition of $\hat \gamma$ plus the residues of the $X$'s to the right
of the main subpartition.  This is because by definition of the main
subpartition of $\hat \gamma$, the $\lambda_1$ $k$-bounded cells in the first
row of $\gamma$ start exactly in the leftmost column of the main subpartition.
There are thus exactly $\lambda_1=|B|$ distinct residues at the top of the
columns of $\gamma$ starting from the leftmost column of the main subpartition.
\end{remark}

\begin{example}
Let $k=6$, $\la=(4,4,3,3,2,1,1)$ and $\gamma=\core (\la)$. The following
Ferrers diagrams illustrate Remark
\ref{PositionOfBResidues}. The first is the Ferrers diagram of $\gamma$, where the $k$-bounded cells are given with their
residues. The second is the $OX$ diagram of the unique pair $(\hat \gamma,B)$ that has no changeable cells, with the
main subpartition in framed cells:
\begin{equation*}
\gamma={\tiny \tableau[scY]{1|2|3,4|4,5,6|,6,0,1|,,1,2,3,4|,,,3,4,5,6}} \qquad \qquad (\hat \gamma,B)= {\tiny
\tableau[scY]{~ X||,~ X|,,~ X|,,,~\bf X|,
,,\tf,~\bf X,~\bf X|,,,\tf,\tf,\tf,~\bf X}}
\end{equation*}
In this case, $B=\{1,3,4,6 \}$, recovered from the bold faced $X$ in the second diagram.
\end{example}

\begin{lemma} \label{PositionOfChangeableCell}
Let $(\delta,A) \in \mathcal C_{\lambda}^{(k)}$.  Then, in the $OX$ diagram
associated to $(\delta,A)$, there are no $O$ cells to the right of the
rightmost changeable cell in $(\delta,A)$.  Furthermore, the
rightmost changeable cell in $(\delta,A)$ is in the main
subpartition of $\hat \gamma$.
\end{lemma}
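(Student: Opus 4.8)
The plan is to carry out the whole argument inside the $k+1$-core $C:=\sigma_A(\delta)$. Recall the three pieces of structure that the hypotheses provide: the $O$ cells are the vertical $(k,\ell)$-strip $\hat\gamma/\delta$, each connected piece a horizontal ribbon whose lowest occurrence lies inside the main subpartition $M$ of $\hat\gamma$ (Remark~\ref{remlowest}) and with no two ribbons in a common row (Proposition~\ref{vertstripprop}); the $X$ cells $C/\delta$ form a horizontal strip, hence at most one per column; and a changeable cell is precisely a cell of $C$ that sits at the top of its column and belongs to $\hat\gamma$, i.e.\ an empty or an $OX$ cell. The elementary bookkeeping device I will use throughout is the column-height dichotomy: in each column either $\delta$ and $\hat\gamma$ have equal height (no $O$ cell in that column) or $\hat\gamma$ is strictly taller (at least one $O$ cell in that column).

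For the first assertion, let $z$ be the rightmost changeable cell, sitting in column $c_0$. For every column $c>c_0$ the topmost cell of $C$ is not changeable and therefore does not lie in $\hat\gamma$; being in $C\setminus\delta$ it must be an $X$ lying strictly above $\hat\gamma$. Since the $X$'s form a horizontal strip, this pins $\delta$ and $\hat\gamma$ to the same height in column $c$, so column $c$ carries no $O$ cell --- with a single residual possibility, namely that $C$ (and hence $\delta$) is empty in column $c$ while $\hat\gamma$ is not, i.e.\ the entire $\hat\gamma$-column is removed and never restored. To rule this out I would examine the topmost cell $d$ of such a column: it is an $O$ cell at the top of a $\hat\gamma$-column, hence an extremal cell of $\hat\gamma$, and I would compare it with $z$, which is likewise extremal and at the top of its column. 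Using the consecutive-diagonal structure of the removed ribbons (Proposition~\ref{T:scover}) together with Proposition~\ref{propo4.1} applied to extremal cells of equal $k+1$-residue, one forces such a fully-removed column to the right of $z$ to leave a changeable cell even further to the right, contradicting the maximality of $c_0$. This extremality comparison is the heart of the lemma.

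For the second assertion I would split on $\ell$. If $\ell\geq 1$ there is at least one $O$ cell, and by Remark~\ref{remlowest} the lowest ribbon of the first removal step lies in $M$, so some $O$ cell sits in a column $\geq j$ (where column $j$ is the left edge of $M$); the first assertion places every $O$ cell in a column $\leq c_0$, whence $c_0\geq j$ and $z\in M$. If $\ell=0$ then $\delta=\hat\gamma$ and there are no $O$ cells, so a changeable cell is just a column-top of $C$ lying in $\hat\gamma$ with no $X$ above it; I claim one such cell lies in $M$. Assuming otherwise, every column of $M$ would have its $C$-top equal to an $X$ sitting above the full $\hat\gamma$-column, so $A$ would contain the residue on top of each column of $M$. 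By Remark~\ref{PositionOfBResidues} these are among the residues of $B$, and matching the $k$-bounded count $|\kbnd(\sigma_A(\hat\gamma))|=|\lambda|$ forces $A$ to agree with $B$ on the remaining columns as well, via the rigidity established in the proof of Lemma~\ref{UniquePair}. Thus $A=B$ and $(\delta,A)=(\hat\gamma,B)\notin\mathcal C_\lambda^{(k)}$, a contradiction.

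The routine part of the argument --- the column-height dichotomy and the step ``$X$ on top of a right-hand column forces no $O$ there'' --- is immediate. The genuine obstacle is the precise $k+1$-residue bookkeeping in the two extremality arguments: eliminating a fully-removed $\hat\gamma$-column to the right of $z$ in the first assertion, and, in the $\ell=0$ case of the second assertion, turning ``$A$ covers all of $M$'' into ``$A=B$''. In both places the decisive input is that the removed ribbons have their heads on consecutive diagonals of a single residue and lie within $M$ (Remarks~\ref{remlowest} and~\ref{rembounded}); this is exactly what stops an $O$ cell, or an uncovered column-top, from escaping past the rightmost changeable cell or out of the main subpartition, and assembling these residue constraints cleanly is where most of the effort goes.
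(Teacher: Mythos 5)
Your frame and the routine reductions are right, and they coincide with the paper's: to the right of the rightmost changeable cell every column either ends with an $X$ strictly above $\hat\gamma$ (hence carries no $O$) or is entirely made of $O$'s, and your $\ell\geq 1$ half of the second assertion (first claim plus Remark~\ref{remlowest} forces the rightmost changeable cell into the main subpartition) is exactly the paper's argument. But at both junctures you yourself flag as ``the heart of the lemma'' you announce a strategy rather than give an argument, and the strategy you announce is not the mechanism that works. The paper never compares extremal cells of equal residue here. To kill the fully-removed column it argues globally: if some column to the right of $c$ consists entirely of $O$'s, then no $X$ appears to its right, every column of $\sigma_A(\delta)$ has height at most one more than the corresponding column of $\hat\gamma$, and since $\gamma/\hat\gamma$ puts a cell on top of \emph{every} column of $\hat\gamma$ (proof of Lemma~\ref{UniquePair}), one gets $\sigma_A(\delta)\subsetneq\gamma$. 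Containment of $k+1$-cores is the Bruhat order, along which $|\kbnd(\cdot)|$ strictly increases, so $|\kbnd(\sigma_A(\delta))|<|\kbnd(\gamma)|=|\lambda|$, contradicting $(\delta,A)\in\mathcal D_{\lambda}^{(k)}$. Your substitute---matching the top cell $d$ of the removed column against $z$ via Proposition~\ref{propo4.1}---is unexecuted and dubious: $d$ and $z$ need not share a $k+1$-residue, and nothing in the sketch actually produces the ``changeable cell even further to the right'' you assert.

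The same gap recurs in your $\ell=0$ case. The ``rigidity'' established inside the proof of Lemma~\ref{UniquePair} says $A=B$ once $A$ places an $X$ on top of \emph{every} column of $\hat\gamma$; your hypothesis only gives $X$'s over the columns of the main subpartition, and the bridge from ``$A$ covers $M$'' to ``$A\supseteq B$'' is again the Bruhat counting argument, not residue bookkeeping: since $A\neq B$ and $|\kbnd(\sigma_A(\hat\gamma))|=|\kbnd(\gamma)|$, one must have $\sigma_A(\hat\gamma)\not\subseteq\gamma$; because the $X$'s form a horizontal strip and $\gamma/\hat\gamma$ covers all columns of $\hat\gamma$, the only way out of $\gamma$ is a first row longer than $\gamma_1$, and by Remark~\ref{PositionOfBResidues} that forces all residues of $B$ into $A$, whence $A=B$, a contradiction. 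So the one idea your proposal is missing---and it is the decisive one, used twice---is the counting-via-Bruhat-order mechanism ($\delta'\subseteq\gamma$ iff $\delta'\leq\gamma$, with strict containment strictly decreasing the number of $k$-bounded cells); the ``precise $k+1$-residue bookkeeping'' you defer to future effort is never supplied, and there is no indication it can replace that global argument.
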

\begin{proof}  Let $c$ be the rightmost changeable cell in $(\delta,A)$.
 By definition of a changeable cell, to the right of $c$
in the $OX$ diagram of $(\delta,A)$,
every column either does not contain any $O$ and finishes with an $X$ or is
entirely made out of $O$'s.  Also observe that, to the right of $c$,
no $X$ cells can appear to the right of an $O$ cell.

Suppose there are some $O$ cells to the right of $c$.
We have in this case $\delta \subset \hat \gamma$.
From the previous comment, the last
column of $\delta$ is entirely made out of $O$'s.
 Now, since $\sigma_A(\delta)/\delta$ is
a horizontal strip and, as we have seen in the proof of Lemma~\ref{UniquePair},
$\gamma/\hat \gamma$ has a cell over all columns of $\hat \gamma$, we have
that
$\sigma_A(\delta) \subset \gamma$ (no $X$ cells can appear
to the right of an $O$ cell, and thus no $X$ cell will appear in the columns
to the right of $\delta \subset \hat \gamma$).  But this implies that $\sigma_A(\delta) < \gamma$
in the Bruhat order, and thus $|\kbnd(\sigma_A(\delta) )|< |\kbnd(\gamma)|$ which
is a contradiction.  This gives the first claim.

In the case where there are some $O$ cells, by the definition of
the pair $(\delta,A)$ there are some $O$ cells in the main subpartition
of $\hat \gamma$, and thus
 the first claim immediately
implies that the rightmost changeable cell in $(\delta,A)$
is in the main subpartition of $\hat \gamma$.

Finally, in the case where
there are no $O$ cells, we have $\delta=\hat \gamma$, $|A|=|B|$, and $A \neq B$,
with $B$ as in Lemma~\ref{UniquePair}.
Suppose that $c$ is not in the main subpartition of $\hat \gamma$.
Then, in the $OX$ diagram
of $(\hat \gamma,A)$ there are $X$'s sitting on top of all columns of
the main subpartition.  Since, by the argument about the Bruhat order given
before in the proof,
we must have $\sigma_A(\hat \gamma) \not \subseteq
\sigma_B(\hat \gamma)=\gamma$, and since $\gamma/\hat \gamma$ has a cell over
every column of $\hat \gamma$, this implies that the first row of $\sigma_A(\hat
\gamma)$ is larger than the first row of $\gamma$.
By Remark~\ref{PositionOfBResidues}, all the residues in $B$ are thus
contained in $A$.  Since $|A|=|B|$, this gives the contradiction $A=B$.
\end{proof}

We now describe the involution $\varphi$ (which will only be shown to
be an involution in Proposition~\ref{Propofinal}).
\begin{definition} \label{definitioninvo}
 Let $(\delta,A) \in {\mathcal C}_{\lambda}^{(k)}$.
The involution $\varphi$ is defined according to the two
  following cases:
\begin{enumerate}
\item [{\bf {I}}] Suppose the rightmost changeable cell $c$ in the $OX$
  diagram
of $(\delta,A)$ is an $OX$ cell, and let $i$ be
the row in which it is found.  In this case, the cells with an $OX$ in row $i$ are the only ones with an $O$.  They
correspond to a horizontal ribbon $R$. The involution $\varphi$ then changes the cells where $R$ and its translates are
located into empty cells.   This can be translated in the $(\delta,A)$ language in the following way.   Let $r$ be the
residue of $c$ and let $r'$ be the residue of the leftmost $OX$ cell  in row $i$.  The involution is then $\varphi:
(\delta,A) \mapsto (t_{r',r+1}(\delta), A\backslash\{r\})$.
\item[{\bf {II}}] Suppose the rightmost changeable cell $c$ in the $OX$
  diagram
of $(\delta,A)$ is an empty cell, and let $i$ be
the row in which it is found.  In this case, it can be shown that $c$ has a residue $r$ that does not belong to $A$.
Let $b$ be the leftmost changeable cell in row $i$ whose residue $r'$ is such that $\{r',r'+1,\dots,r-1,r\} \subseteq A
\cup \{r\}$. The involution changes the cells in row $i$ from $b$ to $c$ into $OX$ cells (plus the corresponding
translates). In the $(\delta,A)$ language, this means that $\varphi: (\delta,A) \mapsto (t_{r',r+1}(\delta),
A\cup\{r\})$.
\end{enumerate}
\end{definition}

\begin{example}
Let $k=5$, $\la=(3,3,3,3,3,3,2,2,2,1)$, $\delta
=\hat \gamma = \core (\hat \la)$ and $A=\{ 2,3,5 \}$.   We are in the
case {\bf II} situation since there are no $OX$ cells.
The leftmost changeable cell of $(\delta,A)$ is framed in the
corresponding Ferrers diagram, where the $k$-bounded cells of $\delta$
are given with their
residues.   Then the action of the involution
is:
\begin{equation*}
(\delta, A) = {\tiny \tableau[scY]{~X|4,~X|5,0|0,1|1,2,~X|,,4,5,0|,,5,0,1|,,0,1,2,~X|,,,,,4,5,*0|,,,,,5,0,1,~X,~X}}
\quad \stackrel{\varphi}{\longrightarrow} \quad (t_{5,1}(\delta),A \cup \{ 0 \}) = {\tiny
\tableau[scY]{~X|4,~X|5,0|0,1|1,2,~X|,,4,\overlay X \overlay O \overlay ,\overlay X \overlay O
\overlay|,,5,0,1|,,0,1,2,~X|,,,,,4,\overlay X \overlay O \overlay,*\overlay X \overlay O \overlay|,,,,,5,0,1,~X,~X}}
\end{equation*}
In the resulting $OX$ diagram, the rightmost changeable cell is an $OX$ cell
and we are thus in the case {\bf  I} situation.  The definition of the
involution then takes us back to $(\delta, A)$.

Observe that in $(t_{5,1}(\delta),A \cup \{ 0 \})$, there is a horizontal
ribbon of residues $5$ and $0$. Considering also the
ribbon of residue $1$ below this ribbon, we can form
a vertical $(5,2)$-strip. Take out this vertical $(5,2)$-strip from
$\hat \gamma$, to form $\delta=(7,6,5,4,3,2,2,2,1)$.
Adding the residues $A=\{ 1,2,3,4,5 \}$ leads to a case {\bf I}
situation, where again the rightmost changeable cell has been framed:
\begin{equation*}
(\delta, A) = {\tiny \tableau[scY]{~X|4,~X|5,0|0,1|1,2,~X|,,4,\overlay X \overlay O \overlay,O|,,5,0,\overlay X
\overlay O \overlay|,,0,1,2,~X|,,,,,4,\overlay X \overlay O \overlay,O|,,,,,5,0,
*\overlay X \overlay O \overlay ,~X,~X,~X,~X}} \quad
\stackrel{\varphi}{\longrightarrow} \quad (t_{1,2}(\delta),A \setminus \{ 1 \}) =  {\tiny
\tableau[scY]{~X|4,~X|5,0|0,1|1,2,~X|,,4,\overlay X \overlay O \overlay,O|,,5,0,1|,,0,1,2,~X|,,,,,4,\overlay X \overlay
O \overlay,O|,,,,,5,0,*1,~X,~X,~X,~X}}
\end{equation*}
Again, we see that the action of $\varphi$ on the resulting diagram
takes us back to the initial pair $(\delta,A)$.
\end{example}

\begin{lemma} \label{LemmaCase1r'-1}
In case {\bf {I}}, residue $r'-1$ does not belong to $A$.
\end{lemma}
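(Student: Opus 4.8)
The plan is to manufacture, from the assumption $r'-1\in A$, two extremal cells of $\delta$ of residue $r'-1$ whose row/column behaviour is incompatible, and then to derive a contradiction from Proposition~\ref{propo4.1}. First I would pin down the cell just left of the ribbon. Let $(i,j)$ be the leftmost $OX$ cell of $R$, so its residue is $r'$, and set $d=(i,j-1)$. In case~{\bf I} the only $O$ cells lie in row $i$ and form $R$, so $d$ carries no $O$; being immediately left of the cell $(i,j)\in\hat\gamma$ it lies in $\hat\gamma$, hence $d\in\delta$ (an empty cell). Since $(i,j)\in\hat\gamma/\delta$ we have $(i,j)\notin\delta$, so $d$ is the last cell of row $i$ in $\delta$; moreover $\hat\gamma_{i+1}\le j-1$ (otherwise $\delta$ would fail to be a partition after deleting $R$), whence $(i+1,j)\notin\delta$ and $d$ is extremal, of residue $(j-1)-i\equiv r'-1$. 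I would also record that $r',r'+1,\dots,r$ all lie in $A$ (the cells of $R$ are $X$ cells) and that $(i,j)$ is an addable corner of $\delta$ of residue $r'$.

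Next, assume for contradiction that $r'-1\in A$. Since the residues occurring in the horizontal strip $\sigma_A(\delta)/\delta$ are exactly the elements of $A$, some added cell $x=(a,b)$ has residue $r'-1$. As the strip is horizontal, the cell below $x$ already lies in $\delta$, i.e. $(a-1,b)\in\delta$; consequently $e:=(a-1,b-1)\in\delta$, $e$ has the right neighbour $(a-1,b)$, and $e$ is extremal because $(a,b)=x\notin\delta$. Its residue is $(b-1)-(a-1)=b-a\equiv r'-1$. Thus $d$ and $e$ are two extremal cells of $\delta$ of the same residue $r'-1$, with $d$ at the end of its row and $e$ not.

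Now I would invoke Proposition~\ref{propo4.1}. A one-line argument shows the extremal cells of a partition are totally ordered by the north-west/south-east relation: if one were strictly north-east of another, the rectangle they span would lie inside the diagram and violate extremality of the south-west cell. Hence $d$ and $e$ are comparable. If $e$ is weakly north-west of $d$, part~(1)(a) forces $e$ to be at the end of its row, contradicting that $e$ has a right neighbour. If $e$ is weakly south-east of $d$ and $d$ happens to sit at the top of its column, part~(2)(a) forces $e$ to the top of its column, i.e. $(a,b-1)\notin\delta$, contradicting $(a,b-1)\in\delta$ (needed for $x$ to be addable). Either way these cases are settled.

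The one delicate case, and the step I expect to be the main obstacle, is when $e$ is south-east of $d$ while $d$ is \emph{not} at the top of its column, which happens precisely when $\hat\gamma_{i+1}=\delta_i=j-1$. Here I would exploit the location of $x$: since $x=(a,b)$ has residue $r'-1\notin\{r',\dots,r\}$ it cannot belong to $R=\hat\gamma/\delta$, so $x\notin\hat\gamma$; as $(a-1,b)\in\delta\subseteq\hat\gamma$, the cell $x$ sits directly atop column $b$ of $\hat\gamma$, forcing $b\le\hat\gamma_1$ and $b\ge j\ge j_0$, where $j_0$ is the first column of the main subpartition (which contains $R$ by Remark~\ref{remlowest}). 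Then $e$ lies in a column $\ge j_0$, hence inside the main subpartition, where by Remark~\ref{rembounded} and the no-repeated-residue property used in Proposition~\ref{vertstripprop} the residue $r'-1$ occupies a single diagonal; comparing diagonals would force $d$ and $e$ onto that diagonal, contradicting that they are at least $k+1$ diagonals apart. The only escape, $b=j=j_0$, is impossible because $(i,j_0)\in R\subseteq\hat\gamma$ with $a\le i$ would place $x=(a,j_0)$ inside $\hat\gamma$. This column-bookkeeping, together with the careful use of Remark~\ref{remleft} to control the tall region just to the left of the main subpartition when $j=j_0$, is where the real work lies.
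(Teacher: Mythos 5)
Your general strategy (manufacture two extremal cells of $\delta$ of residue $r'-1$ with incompatible row/column behaviour and play them against Proposition~\ref{propo4.1}) is plausible, and your cell $d=(i,j-1)$ is exactly the end-of-row extremal cell the paper's proof starts from; but the execution has genuine gaps. First, you assert that in case {\bf I} the only $O$ cells are those of $R$, i.e.\ that $\hat\gamma/\delta=R$. That holds only when $\ell=1$: in general $\hat\gamma/\delta$ is a whole vertical $(k,\ell)$-strip containing $\ell$ ribbon-removals, and case {\bf I} only says that \emph{within row $i$} the $O$ cells are the $OX$ cells forming $R$. This breaks your inference ``$x$ has residue $r'-1\notin\{r',\dots,r\}$, hence $x\notin\hat\gamma/\delta$, hence $x\notin\hat\gamma$'': the added cell $x$ could sit on an $O$ cell of another ribbon, i.e.\ be an $OX$ cell, and the same oversight undermines your closing claim that ``$x=(a,j_0)$ inside $\hat\gamma$'' is impossible. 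Second, your construction of $e=(a-1,b-1)$ needs a cell of $\delta$ below $x$, i.e.\ $a\ge 2$; a horizontal strip does not force this, since $x$ may lie in the first row with nothing underneath. This is not a marginal omission: an $X$ of residue $r'-1$ in the first row is precisely the configuration the paper's proof labors over, and it is disposed of there by a counting argument --- every column to the right of $c$ ends in an $X$, forcing at least $k-i+2$ distinct residues in $A$, while $|A|=\lambda_1+(\#\text{ribbons removed})\le (k-h_1)+(h_1-i+1)=k-i+1$, a contradiction. Nothing in your proposal replaces this count.

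Third, in the branch where $e$ is south-east of $d$ and $d$ is at the top of its column, your contradiction rests on ``$(a,b-1)\in\delta$, needed for $x$ to be addable''. That is false: addability of $x$ at the moment of insertion only requires $(a,b-1)$ to be present in the \emph{intermediate} core, and $(a,b-1)$ may itself be an added cell of residue $r'-2\in A$ (a horizontal strip happily contains consecutive cells in one row), in which case Proposition~\ref{propo4.1}(2)(a) yields no contradiction at all. Finally, the case you yourself flag as ``where the real work lies'' is only an outline, and its ingredients are shaky: from $b\ge j$ you get that $e$ lies in column $b-1\ge j_0-1$, which may be strictly to the left of the main subpartition (as may $d$, which sits in column $j-1$), so the ``single diagonal of residue $r'-1$'' argument does not apply as stated. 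Since the first-row case is unhandled, the key counting is absent, and the remaining case is an unexecuted sketch, the proposal does not constitute a proof. For comparison, the paper factors $\sigma_A=\sigma_{A'}\sigma_{A''}$, notes that when $\sigma_{r'-1}$ acts there must be an addable corner of residue $r'-1$ in $\sigma_{A''}(\delta)$, uses the count above to force that corner above row $i$, and then contradicts Proposition~\ref{propo4.1}(1) against the end-of-row extremal cell of residue $r'-1$ in row $i$.
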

\begin{proof}
In row $i$ of $\delta$, there is an extremal cell of residue $r'-1$ at the end of its row. Suppose $r'-1 \in A$. Let
$\sigma_A= \sigma_{A'} \sigma_{A''}$ where $r'-1$ belongs to $A'$ and $r'-2$ (if it exists) belongs to $A''$.  In $\sigma_{A''}(\delta)$,
in row $i$, the extremal cell of residue $r'-1$ is still at the end of its row.  Now, when time comes to act with
$\sigma_{r'-1}$, for $\sigma_A$ to increase the number of $k$-bounded hooks by $|A|$, there needs to be an addable corner of
residue $r'-1$ in $\sigma_{A''}(\delta)$.  We will now show that this addable corner is above row $i$, which will lead to
the contradiction that  in $\sigma_{A''}(\delta)$  there is an extremal cell of residue $r'-1$ above row $i$ that is not at
the end of its row (see Proposition~\ref{propo4.1}).

\begin{center}
\begin{picture}(100,90)
\put(-60,0){\includegraphics{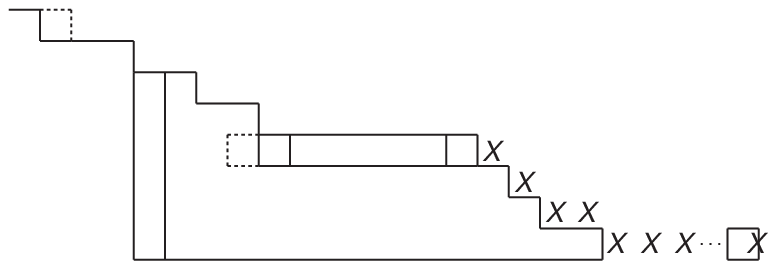}}
\put(69,43){\tiny $r$}
\put(69,33){\footnotesize $c$}
\put(-49,79){\tiny $r'-1$}
\put(15,43){\tiny $r'$}
\put(15,33){\footnotesize $b$}
\put(150,15){\footnotesize $a$}
\put(-36,28.5){\footnotesize $h_1$}
\put(92,33.5){\footnotesize $\leftarrow$ row $i$}
\end{picture}
\end{center}

Since there is no changeable cell to the right of $c$, in the $OX$ diagram associated to $(\delta,A)$, every column to
the right of $c$ ends with an $X$ (and none of them contains an $O$ by
Lemma~\ref{PositionOfChangeableCell}).
Suppose there is an $X$ of residue $r'-1$ below $c$ in the $OX$ diagram of
$(\delta,A)$, and thus sitting on top of a cell of residue $r'$. We have seen that $c$ belongs to the main subpartition
and thus by Remark \ref{rembounded}, this $X$ can only occur in the first row  of the diagram.  But this means that
$X$'s are also found in the first row up until at least residue $r$, or else we find a cell having hook-length equal to
$k+1$. Let the cell with an $X$ of residue $r$ in the first row of the diagram be called $a$.  By a previous comment,
every column between  $c$ and $a$ ends with an $X$, and thus this amounts to
exactly
$k+1-(i-1)=k-i+2$ (recall that $i$ is
the row of $c$) distinct residues of the $X$'s. Recall that $|A|$ is the number of residues to add to $\delta$, thus
$|A| \geq k-i+2$. Now, if the first column of the main subpartition is of height $h_1$, we have $\lambda_1 \leq k-h_1$.
The number of horizontal ribbons removed when going from $\hat \gamma$ to $\delta$ is at most $h_1-i+1$ (one in each row
above $c$ in the main subpartition plus the one in row $i$). Since $|A|=\la_1 +
\textit{the number of horizontal ribbons removed}$,
then $|A| \leq k-h_1 +(h_1-i+1)=k-i+1$. This is a contradiction.
\end{proof}

\begin{lemma} \label{LemmaCase2r'-1}
In case {\bf {II}}, residue $r'-1$ does not belong to $A$.
\end{lemma}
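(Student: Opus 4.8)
The plan is to argue by contradiction, in the same spirit as Lemma~\ref{LemmaCase1r'-1}: suppose $r'-1\in A$. The point is that the minimality built into the choice of $b$ then forces a cell of residue $r'-1$ to be added directly on top of $b$, which is impossible because $b$ is changeable.

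First I would exploit the choice of $b$. Let $b^-$ be the cell of residue $r'-1$ immediately to the left of $b$ in row $i$ (the case where $b$ opens row $i$, so that $b^-$ does not exist, is deferred to the end). Since $b$ is the \emph{leftmost} changeable cell of row $i$ whose residue $r'$ satisfies $\{r',\dots,r-1\}\subseteq A$, and since $r'-1\in A$ by assumption, the cell $b^-$ cannot be changeable: otherwise $b^-$ would satisfy the same run condition $\{r'-1,r',\dots,r-1\}\subseteq A$ while lying further to the left, contradicting the choice of $b$. As $b^-\in\hat\gamma$, the only way for it to fail to be changeable is that it is not at the top of its column in $\sigma_A(\delta)$; hence $\sigma_A(\delta)$ contains the cell directly above $b^-$, whose residue is $r'-2$. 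On the other hand $b$ is empty in the $OX$ diagram, so $b\in\delta$, and $b$ is changeable, so $b$ sits at the top of its column in $\sigma_A(\delta)$; in particular the cell directly above $b$, of residue $r'-1$, is absent from $\sigma_A(\delta)$, and a fortiori from $\delta$.

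Next I would pin down the stage at which $\sigma_{r'-1}$ acts and produce the forbidden cell. Write $\sigma_A=\sigma_{A'}\sigma_{A''}$ with $r'-1\in A'$ and $r'-2\in A''$ (when it occurs), exactly as in Lemma~\ref{LemmaCase1r'-1}, so that $\sigma_{r'-1}$ is the first factor of $\sigma_{A'}$ to act and therefore acts on $\sigma_{A''}(\delta)$. The cell above $b^-$ has residue $r'-2$, hence is never created by $\sigma_{A'}$ and is already present in $\sigma_{A''}(\delta)$ (it lies in $\delta$, or is added by $\sigma_{A''}$). The cell above $b$, of residue $r'-1$, is absent from $\delta$ and is not added by $\sigma_{A''}$, so it is absent from $\sigma_{A''}(\delta)$. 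Thus in $\sigma_{A''}(\delta)$ the cell above $b$ is empty, its left neighbour (the cell above $b^-$) is present, and its lower neighbour $b$ is present: it is an addable corner of residue $r'-1$. Since $\sigma_A$ raises the number of $k$-bounded hooks of $\delta$ by $|A|$, the factor $\sigma_{r'-1}$ must act by adding, and it adds \emph{all} addable corners of residue $r'-1$; in particular it fills the cell above $b$. But then this cell belongs to $\sigma_A(\delta)$, contradicting that $b$ is at the top of its column. Hence $r'-1\notin A$.

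What makes this shorter than its analogue in Case~{\bf I} is structural: here the cells $b,\dots,c$ are \emph{present} in $\delta$ (they are the removable ribbon on which the involution acts), so $b\in\delta$ and the forbidden cell sits immediately above $b$; in Case~{\bf I} these cells had been deleted, which is precisely why that argument needed the more delicate localization of the addable corner together with the counting estimate. The steps I expect to require the most care are the stage bookkeeping — verifying that the cell above $b^-$ survives into $\sigma_{A''}(\delta)$ and that the cell above $b$ is genuinely addable there rather than already occupied — and the boundary case in which $b$ opens row $i$, where $b^-$ is unavailable and $r'-1$ must be excluded from $A$ directly, by appealing to the position of the left edge of the main subpartition of $\hat\gamma$ (cf.\ Remark~\ref{remleft}).
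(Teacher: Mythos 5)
Your main argument is the paper's argument. The paper's proof runs exactly as you do: by the minimality of $b$ in case \textbf{II}, the cell immediately to the left of $b$ cannot be changeable (if it were, its residue $r'-1$ would satisfy the run condition further left), hence it is covered from above; then, writing $\sigma_A=\sigma_{A'}\sigma_{A''}$ with $r'-1\in A'$ and $r'-2\in A''$, the cell above $b$ is an addable corner of residue $r'-1$ in $\sigma_{A''}(\delta)$, so $\sigma_{r'-1}$ must add it (each factor has to be length-increasing for $\sigma_A$ to gain $|A|$ $k$-bounded hooks, and by Proposition~\ref{propo4.1}(3) adding means adding \emph{all} addable corners of that residue), contradicting that $b$ is at the top of its column in $\sigma_A(\delta)$. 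Your extra bookkeeping — that the cell above $b^-$ has residue $r'-2\in A''$ and therefore already sits in $\sigma_{A''}(\delta)$, while the cell above $b$ has residue $r'-1$ and is still absent at that stage — is correct and only makes explicit what the paper leaves implicit.

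The one genuine gap is the boundary case you defer, where $b$ opens row $i$. You never prove it, and the patch you gesture at — excluding $r'-1$ from $A$ directly via Remark~\ref{remleft} — is the wrong tool: that remark compares column heights on either side of the main subpartition and gives no handle on whether $r'-1\in A$. In fact no separate argument is needed, which is why the paper says the cell above $b$ is addable ``whether there is or not a cell to the left of $b$''. If $b=(i,1)$, then $b$ tops column $1$ in $\sigma_{A''}(\delta)$ (the only residue that could cover it is $r'-1$, which lies in $A'$), so $i$ is the top row there; by the paper's convention that $(\ell(\gg)+1,1)$ is an addable corner, the position above $b$ is an addable corner of residue $r'-1$ with no left neighbour required, and $\sigma_{r'-1}$ fills it, giving the same contradiction. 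Replacing your deferred sketch by this observation makes your proof coincide with the paper's uniformly.
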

\begin{proof}
If the cell immediately to the left of $b$ exists and is free
from above (therefore changeable), then its residue cannot be in $A$ because otherwise
this would violate the definition of $\varphi$ in case {\bf {II}}.

Now suppose that $r'-1 \in A$, and  let $\sigma_A= \sigma_{A'} \sigma_{A''}$
where $r'-1$ belongs to $A'$ and $r'-2$ (if it exists) belongs to $A''$. Whether there is or not a cell to left of
$b$, in $\sigma_{A''}(\delta)$ the cell above $b$ is an addable corner of residue
$r'-1$ (since the cell immediately to the left of $b$, as we just saw,
cannot be free from above).
Thus $\sigma_A$ adds an $X$ above $b$, which leads to the contradiction that
$b$ is not changeable.
\end{proof}

\begin{lemma} \label{LemmaCase2r}
In case {\bf {II}}, residue $r$ does not belong to $A$.
\end{lemma}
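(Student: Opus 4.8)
The plan is to argue by contradiction: assume $r\in A$ and derive a clash with the core structure of Proposition~\ref{propo4.1}, in the same spirit as the proofs of Lemmas~\ref{LemmaCase1r'-1} and~\ref{LemmaCase2r'-1}. First I would record the local geometry around the rightmost changeable cell $c=(i,j)$. Since $c$ is changeable and empty, it is a cell of $\delta$ lying at the top of its column in $\sigma_A(\delta)$, so the cell directly above $c$ is absent from $\sigma_A(\delta)$ and hence from every intermediate core obtained while applying $\sigma_A$. By Lemma~\ref{PositionOfChangeableCell}, $c$ lies in the main subpartition of $\hat\gamma$ and there are no $O$ cells to its right; by Remark~\ref{rembounded} the diagonals of the main subpartition carry pairwise distinct residues, so $c$ is the only residue-$r$ cell of the main subpartition sitting at the top of its column.

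The key step is to pin down where the $X$ of residue $r$ would have to go if $r\in A$. Recall that $\{r',r'+1,\dots,r-1\}\subseteq A$ and, by Lemma~\ref{LemmaCase2r'-1}, that $r'-1\notin A$. I would first use $r'-1\notin A$ to show that no $X$ is ever placed on top of the cells of row $i$ from $b$ to $c$: the addable-corner condition propagates rightward from $b$, and since the cell above $b$ never becomes addable (its residue is $r'-1\notin A$), none of the cells above $b,\dots,c$ are added either. Thus the cells $b,\dots,c$ remain at the tops of their columns in $\sigma_A(\delta)$, all empty. Now isolate the action of $\sigma_r$ by writing $\sigma_A=\sigma_{A'}\sigma_{A''}$ with $r\in A'$ and $r-1$ (if present) in $A''$, exactly as in the previous two lemmas. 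Since $\sigma_A$ raises the number of $k$-bounded cells by $|A|$, the factor $\sigma_r$ must act in ``adding'' mode, so it requires an addable corner of residue $r$ at its stage and, by Proposition~\ref{propo4.1}(3), forbids a removable corner of residue $r$ at that same stage.

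I would close the argument by exhibiting $c$ itself as a removable corner of residue $r$ precisely when $\sigma_r$ acts: its top is absent throughout (first paragraph), and its right-hand neighbour $(i,j+1)$ is absent at that stage as well, whence $c$ is removable and Proposition~\ref{propo4.1}(3) is violated. Controlling the right-hand neighbour is the main obstacle. Because $c$ is the rightmost changeable cell and carries no $O$ to its right (Lemma~\ref{PositionOfChangeableCell}), any cell occupying $(i,j+1)$ in the final core must be an $X$ sitting at the top of its column; I would rule this out at the stage $\sigma_r$ acts by combining the translate structure of Proposition~\ref{T:scover} with the residue bookkeeping of Remark~\ref{PositionOfBResidues}, treating separately the delicate subcase $r+1\in A$ (where an $X$ of residue $r+1$ could a priori be inserted to the right of $c$ before $\sigma_r$ acts, and must be excluded). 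Once the right neighbour is shown absent, the contradiction is immediate and we conclude $r\notin A$.
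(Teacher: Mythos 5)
Your skeleton is the paper's: decompose $\sigma_A=\sigma_{A'}\sigma_{A''}$ with $r\in A'$ and $r-1$ (if present) in $A''$, argue that nothing is ever added above $c$ (immediate from $c$ being at the top of its column in $\sigma_A(\delta)$, since the intermediate cores only grow), and conclude that when $\sigma_r$ acts the cell $c$ is a removable corner of residue $r$, contradicting via Proposition~\ref{propo4.1}(3) the fact that $\sigma_r$ must add cells if $\sigma_A$ is to raise the number of $k$-bounded hooks by $|A|$. But the one step you yourself flag as ``the main obstacle'' --- ruling out an $X$ of residue $r+1$ at $(i,j+1)$ placed \emph{before} $\sigma_r$ acts --- is precisely the step you do not actually prove, and the tools you propose for it cannot deliver it. Proposition~\ref{T:scover} describes the ribbon structure of covering relations $\delta\lessdot\gamma$, and Remark~\ref{PositionOfBResidues} concerns only the special pair $(\hat\gamma,B)$ with no changeable cell; neither says anything about the order in which the simple reflections inside $\sigma_A$ are applied, which is what the subcase $r+1\in A$ turns on. As written, the proposal reduces the lemma to an unresolved claim.

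The paper closes this gap in one line, using the convention built into the definition of $\sigma_A$: for each maximal cyclic component $[a,b]$ of $A$, the factor is $\sigma_b\sigma_{b-1}\cdots\sigma_a$, so $\sigma_a$ is applied first and $\sigma_b$ last, i.e.\ within a component the residues act in ascending order. If $r+1\in A$ then, since $r\in A$, the residues $r$ and $r+1$ lie in the same cyclic component, hence $\sigma_{r+1}$ acts \emph{after} $\sigma_r$; in your decomposition this forces $r+1\in A'$, so $\sigma_{A''}$ can never have inserted the residue-$(r+1)$ cell to the right of $c$. With that observation your argument closes exactly as intended (the rest of what you write is sound: the absence of $O$'s to the right of $c$ from Lemma~\ref{PositionOfChangeableCell}, the partition-shape argument that any occupant of $(i,j+1)$ in the final core would be a top-of-column cell, and the exclusion of the empty-cell alternative by rightmostness of $c$). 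One further remark: your first-paragraph excursion showing no $X$ lands above any of $b,\dots,c$ via $r'-1\notin A$ and Lemma~\ref{LemmaCase2r'-1} is correct but superfluous here --- only the cell above $c$ matters, and that case is already settled by $c$ being changeable; the propagation argument belongs to the proof of Lemma~\ref{LemmaCase2r'-1} itself, not to this one.
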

\begin{proof}
Suppose that $r \in A$, and  let $\sigma_A= \sigma_{A'} \sigma_{A''}$ where $r$ belongs to $A'$ and $r-1$ (if it exists) belongs to
$A''$. Note that acting with $\sigma_{A''}$ cannot add an $X$ above $c$, since $c$ is changeable, and cannot add an $X$ to
the right of $c$ since if $r+1 \in A$, then $r+1 \in A'$.  Therefore, when $\sigma_r$ acts, the cell $c$ is a removable
corner. This cannot be if $\sigma_A$ is to increase the number of $k$-bounded hooks by $|A|$.
\end{proof}

\begin{proposition} \label{Propofinal}
  The map $\varphi: \mathcal C_{\lambda}^{(k)}\to \mathcal
  C_{\lambda}^{(k)}, (\delta,A) \mapsto (\delta',A')$ is a well-defined
sign-reversing involution (that is, the cardinalities of $A$ and $A'$ are different
modulo 2) such that $\sigma_A(\delta)=\sigma_{A'}(\delta')$.
\end{proposition}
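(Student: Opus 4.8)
The plan is to verify the four assertions in turn, treating the two cases of Definition~\ref{definitioninvo} in parallel and checking only at the end that $\varphi$ exchanges them. The sign-reversing property is immediate: in case \textbf{I} the changeable cell $c$ carries an $X$, so $r\in A$ and $A'=A\setminus\{r\}$ gives $|A'|=|A|-1$, while in case \textbf{II}, Lemma~\ref{LemmaCase2r} guarantees $r\notin A$, so $A'=A\cup\{r\}$ gives $|A'|=|A|+1$; in either case $|A|$ and $|A'|$ have opposite parity.

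The core of the argument is the identity $\sigma_A(\delta)=\sigma_{A'}(\delta')$, which I would prove at the level of cell sets. In case \textbf{I} the $OX$ cells of row $i$ form a horizontal ribbon $R$ of residues $r',r'+1,\dots,r$ with head $c$, and by Proposition~\ref{T:scover} the transposition $t_{r',r+1}$ adjoins to $\delta$ exactly $R$ together with its translates. Thus $\delta'=t_{r',r+1}(\delta)$ merely reclassifies these $OX$ cells as ordinary cells of $\delta'$; since each of them already carried an $X$ in $\sigma_A(\delta)$, the cell set of $\sigma_{A'}(\delta')=\delta'\cup\bigl(\sigma_A(\delta)/\delta\setminus\{R\text{ and translates}\}\bigr)$ is literally that of $\sigma_A(\delta)$. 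What then remains is to check that $A'=A\setminus\{r\}$ is the residue set producing the leftover $X$'s as a horizontal strip over $\delta'$: this is where $r$ enters as the residue entirely consumed by the heads of $R$ and its translates (so that $\sigma_r$ is no longer required) and where $c$ being at the top of its column keeps the remaining $X$'s unobstructed, with Lemma~\ref{LemmaCase1r'-1} ruling out a collision at residue $r'-1$. Case \textbf{II} is the reverse move, with Lemmas~\ref{LemmaCase2r'-1} and \ref{LemmaCase2r} supplying the analogous non-collision facts at residues $r'-1$ and $r$. The equality of cores in particular forces $|\kbnd(\sigma_{A'}(\delta'))|=|\lambda|$, so the size condition defining $\mathcal D_\lambda^{(k)}$ is automatically preserved.

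Next I would show that $\varphi$ is well defined, i.e.\ that $(\delta',A')\in\mathcal C_\lambda^{(k)}$. That $\hat\gamma/\delta'$ is again a removable vertical $(k,\ell')$-strip follows from Proposition~\ref{T:scover} applied to $t_{r',r+1}$ together with Lemma~\ref{PositionOfChangeableCell}, which confines every $O$-cell to the left of $c$ and places $c$ inside the main subpartition, so that adjoining or deleting the ribbon meets the main subpartition in a single new (or vacated) row and keeps the lowest ribbons in distinct rows. The $k$-bounded count is the equality already established. Finally the image still has a changeable cell, namely $c$ itself: in case \textbf{I} it becomes an empty cell and in case \textbf{II} an $OX$ cell, but in both situations it remains at the top of its column in the common core $\sigma_{A'}(\delta')$.

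The delicate point, which I expect to be the main obstacle, is that $\varphi$ is an involution with the two cases genuinely interchanged. On the $A$-side the operations $A\mapsto A\setminus\{r\}$ and $A\mapsto A\cup\{r\}$ undo each other, and $t_{r',r+1}$ is self-inverse (see the remark after \eqref{trs:first}), so everything reduces to showing that $c$ is again the \emph{rightmost} changeable cell of the image and that its type has switched. For this I would use Lemma~\ref{PositionOfChangeableCell} to argue that the local move alters no column to the right of $c$, so no new changeable cell can appear there, while the residue $r$ and row $i$ of $c$ are preserved, so that the reverse rule selects the same $r'$ as the leftmost qualifying residue of row $i$. The heart of this step is to match, in both directions, the combinatorial description of the ribbon $R$ produced by case \textbf{I} with the cyclic-interval condition $\{r',\dots,r\}\subseteq A\cup\{r\}$ governing case \textbf{II}; verifying that these two descriptions are mutually inverse is the bookkeeping on which the whole involution rests.
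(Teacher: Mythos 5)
Your overall architecture matches the paper's proof closely: same case split, same sign computation ($r\in A$ in case \textbf{I}, Lemma~\ref{LemmaCase2r} in case \textbf{II}), the same three lemmas invoked for the same purposes, and the same closing argument that the rightmost changeable cell is preserved with its type switched, with Lemma~\ref{LemmaCase1r'-1} making the two cases mutually inverse. But at the two points where the paper does real work, your sketch substitutes an assertion. For the identity $\sigma_A(\delta)=\sigma_{A'}(\delta')$, your cell-set argument presupposes exactly what must be proved: that acting with $\sigma_{A'}$ on $\delta'$ adds precisely the leftover $X$'s (you concede this in ``what then remains is to check\dots'' but never supply the check). The paper's route is a short Coxeter-word cancellation: by Lemma~\ref{LemmaCase1r'-1} the cyclic interval $[r',r]$ is a maximal cyclic component of $A$, so one may factor $\sigma_{A\setminus\{r\}}=\sigma_{D'}\,\sigma_{r-1}\cdots\sigma_{r'}$ and telescope this against the palindromic word \eqref{trs:first} for $t_{r',r+1}$, leaving $\sigma_{D'}\sigma_r\sigma_{r-1}\cdots\sigma_{r'}=\sigma_A$. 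Relatedly, in case \textbf{I} you never invoke Lemma~\ref{lemmaorder}, which is what lets the paper reorder the chain so that the ribbon $R$ is removed last and hence conclude that dropping it leaves a genuine vertical $(k,\ell-1)$-strip.

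The genuine gap is in case \textbf{II}'s well-definedness. You claim that $\hat\gamma/\delta'$ is a removable vertical strip ``follows from Proposition~\ref{T:scover} applied to $t_{r',r+1}$,'' but Proposition~\ref{T:scover} describes $\gamma/\delta$ only \emph{after} one knows the cover relation $\delta\lessdot\gamma$ holds; for $(\delta',A')=(t_{r',r+1}(\delta),A\cup\{r\})$ it is not a priori clear that $|\kbnd(t_{r',r+1}(\delta))|=|\kbnd(\delta)|-1$, so your appeal is circular. This is precisely the crux the paper settles by a two-sided count: the lower bound $|\kbnd(t_{r',r+1}(\delta))|\geq|\kbnd(\delta)|-1$ falls out of the already-established identity $\sigma_{A\cup\{r\}}t_{r',r+1}(\delta)=\sigma_A(\delta)$ together with the fact that $\sigma_{A\cup\{r\}}$ can raise the $k$-bounded count by at most $|A|+1$; the upper bound comes from writing $t_{r',r+1}=\sigma_r\sigma_{r-1}\cdots\sigma_{r'}\cdots\sigma_{r-1}\sigma_r$ as in \eqref{trs:second} and observing that the initial segment $\sigma_{r'}\cdots\sigma_{r-1}\sigma_r$ strips $r-r'+1$ $k$-bounded cells from $\delta$ (here the changeability of the cells from $b$ to $c$ is used). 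Only with the resulting equality in hand does $t_{r',r+1}(\delta)\lessdot\delta$ follow, and only then does Proposition~\ref{T:scover} identify the removed cells as translates of a single horizontal ribbon in row $i$, allowing the $(k,\ell)$-strip to be extended to a $(k,\ell+1)$-strip. Without this counting argument your case \textbf{II} does not establish membership of $(\delta',A')$ in $\mathcal D_\lambda^{(k)}$, and the involution is not well defined.
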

\begin{proof}

Consider case {\bf {I}}.  In this case, $(\delta',A')=(t_{r',r+1}(\delta),A\backslash \{r\})$. Suppose we have the
$(k,\ell)$-strip $\hat \gamma=\omega^{(1)}\supset \omega^{(2)} \supset \cdots \supset \omega^{(\ell+1)}=\delta$, and
that $\omega^{(i)}/\omega^{(i+1)}$ contains the horizontal ribbon $R$. Thus, from Proposition~\ref{T:scover}, we have
$\omega^{(i)}= t_{r',r+1}(\omega^{(i+1)})$. We will now see that
\begin{equation} \label{Case1VerticalStripProof}
\hat \gamma=\omega^{(1)}\supset  \cdots \supset \omega^{(i)}= t_{r',r+1}(\omega^{(i+1)})\supset
t_{r',r+1}(\omega^{(i+2)})  \supset \cdots \supset t_{r',r+1}(\omega^{(\ell+1)})=t_{r',r+1} (\delta)
\end{equation}
is a vertical $(k,\ell-1)$-strip.  By definition, in the $OX$ diagram associated
to $(\delta,A)$ the cells corresponding to
$R$ are $OX$ cells.  This implies that there are no $O$
cells below $R$, and thus that $R$ does not sit on top of any of the ribbons that occur later in the vertical
$(k,\ell)$-strip.  Furthermore, by Lemma~\ref{PositionOfChangeableCell},
 there are no $O$ cells to the right of $c$, and thus
neither there are horizontal ribbons to the right of $R$.
\begin{center}
\begin{picture}(100,75)
\put(-45,0){\includegraphics{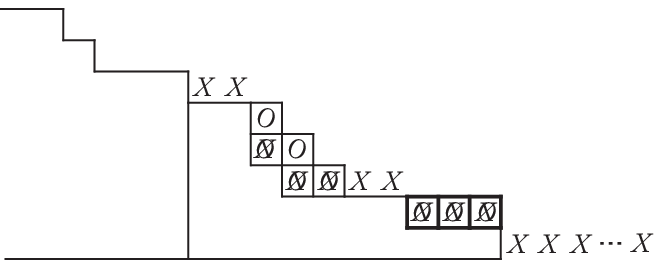}}
\put(85,25){$R$}
\end{picture}
\end{center}
Therefore, by Lemma~\ref{lemmaorder}, the ribbon $R$ (corresponding to $t_{r',r+1}$)  could have been extracted last,
which gives that \eqref{Case1VerticalStripProof} is a vertical $(k,\ell-1)$-strip. Now, since $c$ is filled with an
$OX$, we have $r \in A$. Therefore, $|A\backslash \{r\}|=|A|-1$ and $\varphi$ is a sign-reversing map. Given that $r'-1
\not \in A$ by Lemma \ref{LemmaCase1r'-1}, we can let $\sigma_{A\backslash \{r\}} =\sigma_{D'} \sigma_{D''}$, where
$\sigma_{D''}= \sigma_{r-1} \sigma_{r-2} \cdots \sigma_{r'+1} \sigma_{r'}$. Therefore, using
\begin{equation}
t_{r',r+1} = \sigma_{r'} \sigma_{r'+1} \cdots \sigma_{r-2} \sigma_{r-1} \sigma_r \sigma_{r-1} \sigma_{r-2} \cdots \sigma_{r'+1} \sigma_{r'}
\end{equation}
we find
\begin{equation}
\sigma_{A'}(\delta ')=\sigma_{A\backslash \{r\}} t_{r',r+1} (\delta)= \sigma_{D'} \sigma_{r} \sigma_{r-1} \dots \sigma_{r'} (\delta) = \sigma_{A} (\delta)
\, .
\end{equation}
Therefore case {\bf {I}} is a well defined sign-reversing map
such that $\sigma_A(\delta)=\sigma_{A'}(\delta')$.

Now, we consider case {\bf {II}}.  In this case
$(\delta',A')=(t_{r',r+1}(\delta),A \cup \{r\})$.
From Lemma \ref{LemmaCase2r} we have $|A \cup
\{r\}|=|A|+1$, and thus $\varphi$ is again a sign-reversing
map. By Lemma \ref{LemmaCase2r'-1},  $r'-1 \not \in A$, so we can let $\sigma_{A\cup \{r\}} =\sigma_{D'} \sigma_{D''}$, where
$\sigma_{D''}= \sigma_{r} \sigma_{r-1} \dots \sigma_{r'}$. Therefore, using the same idea as before, we find
\begin{equation} \label{eq5.5}
\sigma_{A'}(\delta ')=\sigma_{A\cup \{r\}} t_{r',r+1} (\delta)= \sigma_{D'} \sigma_{r-1} \dots \sigma_{r'} (\delta) = \sigma_{A} (\delta) \, .
\end{equation}
By definition, we have that $|\kbnd( \sigma_A (\delta) )|=|\kbnd(\delta)|+|A|$. We also have that
\begin{equation}
|\kbnd( \sigma_{A'} (\delta') )|= |\kbnd (\sigma_{A \cup \{ r \}} t_{r',r+1} (\delta))| \leq |\kbnd (t_{r',r+1}
(\delta))| + |A| +1
\end{equation}
since $\sigma_{A \cup \{ r \}}$ can increase the degree of $t_{r',r+1} (\delta)$ by at most the cardinality of $A \cup
\{r \}$.  Using \eqref{eq5.5}, we then find that $|\kbnd( t_{r',r+1} (\delta))| \geq |\kbnd(\delta)| -1$. On the other
hand, from the definition of case {\bf {II}}, we see that applying $\sigma_{r'} \cdots \sigma_{r-1} \sigma_{r}$ on
$\delta$ removes $r-r'+1$ $k$-bounded cells from $\delta$. This gives, using
\begin{equation}
t_{r',r+1} =\sigma_{r} \sigma_{r-1} \cdots \sigma_{r'} \cdots \sigma_{r-1} \sigma_{r} \, ,
\end{equation}
that $|\kbnd( t_{r',r+1} (\delta))| \leq |\kbnd(\delta)| -1$. Therefore, $|\kbnd( t_{r',r+1} (\delta))| =
|\kbnd(\delta)| -1$, and thus, $t_{r',r+1} (\delta) \lessdot \delta$.  By Proposition~\ref{T:scover}, this corresponds
to removing a horizontal ribbon $R$ in row $i$, since applying $t_{r',r+1}$ to $\delta$ removes among other things
 the extremal cells of residues $r', \dots, r$ in row $i$.   Note
that by Lemma~\ref{PositionOfChangeableCell},
the lowest occurrence of $R$ is in the main subpartition of $\hat \gamma$,
and no more $O$'s are found in row $i$ (thus $R$ is the only horizontal
ribbon in its
row).  As a consequence,
if the vertical $(k,\ell)$-strip $\hat \gamma=\omega^{(1)}\supset  \omega^{(2)}
\supset \cdots \supset \omega^{(\ell+1)}=\delta$ is associated to $\delta$, then
the  vertical $(k,\ell+1)$-strip
$\hat \gamma=\omega^{(1)}\supset  \omega^{(2)}
\supset \cdots \supset \omega^{(\ell+1)}=\delta \supset t_{r',r+1} (\delta)$
is associated to $t_{r',r+1} (\delta)$.
Therefore case {\bf {II}} is also a well defined sign-reversing map such that
$\sigma_A(\delta)=\sigma_{A'}(\delta')$.

Finally, observe that the map $\varphi$ is such that the rightmost changeable
cell of $(\delta,A)$ corresponds to the rightmost changeable
cell of $(\delta',A')$.  By construction, and by Lemma~\ref{LemmaCase1r'-1}
which insures that case {\bf {II}} is the inverse of case {\bf {I}},
$\varphi$ is thus an involution.
\end{proof}

\medskip

\end{document}